\tikzstyle{vertex} = [outer color=black,draw, color=black,line width=0.2mm, inner color=black, circle,inner sep=0.2mm,minimum size=0.2mm] 
\tikzstyle{edge} = [line width = 0.3mm] 
\DeclareRobustCommand{\cev}[1]{%
  {\mathpalette\do@cev{#1}}%
}
\newcommand{\do@cev}[2]{%
  \vbox{\offinterlineskip
    \sbox\z@{$\m@th#1 x$}%
    \ialign{##\cr
      \hidewidth\reflectbox{$\m@th#1\vec{}\mkern4mu$}\hidewidth\cr
      \noalign{\kern-\ht\z@}
      $\m@th#1#2$\cr
    }%
  }%
}
\def\:{\colon}
\def\.{,\dots,}
\def\Mgn{\overline{\mathcal{M}}_{g,n}}
\def\DMgn{\overline{\mathcal{M}}_{g,A}}
\def\multiset#1#2{\ensuremath{\left(\kern-.3em\left(\genfrac{}{}{0pt}{}{#1}{#2}\right)\kern-.3em\right)}}
\newcommand{\SH}{\mathcal{S}\mathsf{H}}
\newcommand{\Mdiamond}{{\mathcal{M}_{g,A}^\diamond}}
   \newtheorem{theorem}[subsection]{Theorem}
      \newtheorem*{theorem*}{Theorem}
   \newtheorem{lemma}[subsection]{Lemma}
   \newtheorem{corollary}[subsection]{Corollary}
   \newtheorem{conjecture}[subsection]{Conjecture}
   \newtheorem*{conjecture*}{Conjecture}
\theoremstyle{definition}
          \newtheorem*{exercise*}{Exercise}
   \newtheorem{example}[subsection]{Example}
   \newtheorem*{example*}{Example}
   \newtheorem{definition}[subsection]{Definition}
   \newtheorem*{definition*}{Definition}
   \newtheorem{remark}[subsection]{Remark}
\title{Pullbacks of Brill-Noether Classes under Abel-Jacobi Sections}
\author{Sam Molcho \vspace{-3em}}
\begin{document}
\bibliographystyle{amsalpha}
\maketitle

\begin{abstract} 
We prove that the pullbacks of the virtual fundamental classes of the Brill-Noether loci under any Abel-Jacobi section lie in the tautological ring of $\overline{\mathcal{M}}_{g,n}$. This resolves a conjecture of \cite{PRvZ}, and is part of a broader program to understand the logarithmic intersection theory of $\Mgn$. 
\end{abstract}

\section{Introduction}
Let $\mathcal{M}_{g,n}$ be the moduli space of smooth genus $g$, $n$-pointed curves, and $\Mgn$ its compactification by stable curves. Many of the deepest results on the geometry of $\Mgn$ are tied to its enumerative geometry: the study of its Chow ring $\mathsf{CH}^\star(\Mgn)$. While it is currently difficult to get a handle on the full Chow ring, Mumford introduced in \cite{MumTow} a distinguished subring 
$$
R^\star(\Mgn) \subset \mathsf{CH}^\star(\Mgn)
$$
called the \emph{tautological} ring of $\Mgn$. The name is due to the definition of $R^\star(\Mgn)$ as generated by certain classes on $\Mgn$ that arise from its structure as a moduli space: classes that come from the cotangent bundle on the markings and the canonical bundle, and are thus present on every family of curves, without special reference to their geometry. 

Since its introduction in \cite{MumTow}, efforts to understand the structure of the tautological ring have led some of the most striking advances of our understanding of $\Mgn$. The results of these efforts have been substantial: we now know that $R^\star(\Mgn)$ is, surprisingly, intimately connected with Gromov-Witten theory \cite{Witten,KonAiry}. We understand \cite{GrPannonTaut} an additive set of generators of $R(\Mgn)$; and, following work of Faber-Zagier and Pixton, we have a conjectural full set of relations \cite{PPrel,PPZ}. Furthermore, computer software has been developed \cite{admcycles} that allows us to perform calculations which were far out of reach even in recent years. 

The question then that one is immediately confronted with when encountering a geometrically interesting cycle on $\Mgn$ is ``Is it in the tautological ring?". If the answer is affirmative, the immediate follow up is ``What is a formula for the class, in terms of the generators of $R^\star(\Mgn)$?". And in the rare cases where a class naturally has more than one such formula, one can ask ``How does it fit with the Faber-Zagier and Pixton relations?". Our goal in this paper is to answer the first question for certain loci on $\Mgn$ that come from Brill-Noether theory. Forthcoming work with Abreu and Pagani aims to answer the remaining two. 

To explain the source of these cycles, recall that the Jacobian 
$$
\mathsf{Jac}^d(\mathcal{C}_{g,n}/\mathcal{M}_{g,n}) \to \mathcal{M}_{g,n}
$$  
parametrizing degree $d$ line bundles on the universal curve $\mathcal{C}_{g,n}$ is an abelian scheme. Extending $\mathsf{Jac}^d$ to $\Mgn$ is a subtle problem, as the space of all degree $d$ line bundles is neither universally closed nor separated over the boundary of $\Mgn$.  Compactifications of the generalized Jacobian have been the subject of a vast literature, with \cite{OdaSeshadri,Pan,Cap,Est,Melo,KP} being just a few representatives. The best behaved of these compactifications are probably the compactifications constructed in \cite{KP}. The \cite{KP} compactifications depend on the notion of a \emph{universal numerical stability condition} $\theta$. This is a collection of real numbers assigned to every component of every fiber of $\overline{\mathcal{C}}_{g,n} \to \Mgn$, adding up to a fixed degree $d \in \mathbb{Z}$, and which behave appropriately with respect to specializations of curves. The precise notion itself is not relevant to the present paper, but its consequence is that for every choice of $\theta$ of degree $d$ that is sufficiently generic, there is a compactification 
$$
\mathsf{Jac}^d(\mathcal{C}_{g,n}/\mathcal{M}_{g,n}) \subset \mathsf{Pic}_{g,n}^\theta \to \Mgn
$$
which is smooth, and which is representable over $\Mgn$. While it is not always true that a generic $\theta$ can be found, in the presence of a marking, i.e. when $n \ge 1$, generic $\theta$ always exist. For generic $\theta$, $\mathsf{Pic}_{g,n}^\theta$ is in fact a fine moduli space, parametrizing $\theta$-stable line bundles on \emph{quasi-stable} curves, i.e. curves that can have chains of semistable rational components of length at most $1$. In particular, $\mathsf{Pic}^{\theta}_{g,n}$ carries a universal (quasi-stable) curve 
$$
\pi:\mathcal{C}_{g,n}^\theta \to \mathsf{Pic}_{g,n}^\theta 
$$  
and a line bundle $\mathcal{L}$. The Brill-Noether loci in $\mathsf{Pic}_{g,n}^\theta$ are the loci 
$$
\mathcal{W}_{d}^r(\theta): = \left \{(C,L): L \textup{ has at least } r+1 \textup{ sections}  \right \} \subset \mathsf{Pic}_{g,n}^\theta
$$
which extend the usual Brill-Noether loci in $\mathsf{Jac}^d$. The loci $\mathcal{W}_{d}^r(\theta)$ can have components of various dimensions, but they support a virtual fundamental class, an algebraic cycle 
$$
\mathsf{w}_{d}^r(\theta) := \Delta_{g-d+r}^{(r+1)}(-R\pi_*\mathcal{L})
$$
of the expected codimension $g - \rho$, for $\rho$ the Brill-Noether number $g-(r+1)(g-d+r)$, defined as a degeneracy locus of $R\pi_*\mathcal{L}$ in \cite{PRvZ}. 

Fix now an integer $k$, a vector of integers $(a_1,\cdots,a_n)$, and $D$ a boundary divisor of the universal curve $\overline{\mathcal{C}}_{g,n}$ of $\Mgn$. We package this data in a single symbol ``$A$'', and define a rational section 
\begin{align*}
\mathsf{aj}_A: \Mgn \to \mathsf{Pic}_{g,n}^\theta\\
(C,x_1,\cdots,x_n) \mapsto \omega^k(\sum a_ix_i) \otimes \mathcal{O}(D|_C).
\end{align*}
We can then pull back the classes $\mathsf{w}_{d}^r(\theta)$ to $\Mgn$ to get classes 
$$
\mathsf{w}_{g,A,d}^r(\theta):= \mathsf{aj}_A^*(\mathsf{w}_{g,d}^r(\theta)) \in \mathsf{CH}^{g-\rho}(\Mgn)\footnote{We work throughout with Chow groups with rational coefficients.}.
$$
Pagani, Ricolfi and van Zelm conjecture
\begin{conjecture}\cite{PRvZ}
\label{PRvZ conjecture}
The classes $\mathsf{w}_{g,A,d}^r(\theta)$ lie in the tautological ring $R^*(\Mgn)$ for any choice of $g,d,A$ and generic $\theta$.
\end{conjecture}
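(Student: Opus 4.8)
The plan is to reduce the statement to the behavior of the Chern classes of $R\pi_*\mathcal{L}$ under the Abel--Jacobi section, and to compute these by Grothendieck--Riemann--Roch after resolving $\mathsf{aj}_A$ into an honest morphism on a logarithmic modification of $\Mgn$. By the Thom--Porteous formula, the virtual class $\mathsf{w}_{g,d}^r(\theta) = \Delta_{g-d+r}^{(r+1)}(-R\pi_*\mathcal{L})$ is a fixed universal polynomial $P$ in the Chern classes $c_i(-R\pi_*\mathcal{L})$. Since pullback along a morphism of smooth stacks is a ring homomorphism commuting with Chern classes, once $\mathsf{aj}_A$ is resolved to a morphism the pulled-back class is $P$ evaluated on the Chern classes of the pulled-back complex. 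It therefore suffices to prove that the pullbacks of the individual classes $c_i(R\pi_*\mathcal{L})$ lie in $R^*(\Mgn)$.

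The subtlety is that $\mathsf{aj}_A$ is only a rational section: over the boundary the line bundle $\omega^k(\sum a_ix_i)\otimes\mathcal{O}(D|_C)$ need not be $\theta$-stable, so $\mathsf{aj}_A$ is undefined along certain boundary strata and the pullback $\mathsf{aj}_A^*$ must first be given meaning. I would resolve the indeterminacy using the logarithmic/tropical structure of $\Mgn$: there is a logarithmic modification $b\colon \RMgn \to \Mgn$ --- a proper birational morphism obtained by subdividing the tropical moduli space, i.e. an iterated blow-up along boundary strata --- on which $\mathsf{aj}_A$ extends to a genuine morphism $\widetilde{\mathsf{aj}}_A\colon \RMgn \to \mathsf{Pic}_{g,n}^\theta$, and I would set $\mathsf{aj}_A^*(\mathsf{w}) := b_*\,\widetilde{\mathsf{aj}}_A^*(\mathsf{w})$. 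Pulling the universal curve back along $\widetilde{\mathsf{aj}}_A$ yields a universal quasi-stable curve $\tilde\pi\colon \widetilde{\mathcal{C}} \to \RMgn$, and by cohomological base change $\widetilde{\mathsf{aj}}_A^*R\pi_*\mathcal{L} = R\tilde\pi_*\tilde{\mathcal{L}}$ with $\tilde{\mathcal{L}} = \widetilde{\mathsf{aj}}_A^*\mathcal{L}$; crucially, $\tilde{\mathcal{L}}$ is identified with $\omega^k(\sum a_ix_i)\otimes\mathcal{O}(D)\otimes\mathcal{O}(E)$ for an explicit correction divisor $E$ supported on the exceptional and boundary components, the twist being dictated by the piecewise-linear combinatorics of the $\theta$-stability condition.

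With the section resolved, Grothendieck--Riemann--Roch for the curve fibration gives $\mathrm{ch}(R\tilde\pi_*\tilde{\mathcal{L}}) = \tilde\pi_*\!\left(\mathrm{ch}(\tilde{\mathcal{L}})\cdot\mathrm{Td}(\omega_{\tilde\pi}^\vee)\right)$, together with the usual corrections along the nodal locus. Expanding $c_1(\tilde{\mathcal{L}}) = k\,c_1(\omega_{\tilde\pi}) + \sum a_i[x_i] + [D] + [E]$, the right-hand side becomes a polynomial in $\psi$-classes, $\kappa$-classes, the marking and boundary/exceptional divisors, and $E$, all of which are tautological on $\RMgn$. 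Hence $\widetilde{\mathsf{aj}}_A^*c_i(R\pi_*\mathcal{L}) \in R^*(\RMgn)$, and by the first paragraph the pullback of $\mathsf{w}_{g,d}^r(\theta)$ is tautological on $\RMgn$. Finally I would push forward along $b$, invoking the theorem that the tautological ring is closed under pushforward along logarithmic modifications, to conclude that $\mathsf{w}_{g,A,d}^r(\theta) \in R^*(\Mgn)$.

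The heart of the argument --- and the main obstacle --- is the second step: exhibiting the resolution $b$ and pinning down the correction divisor $E$. This amounts to understanding precisely how the universal $\theta$-stable line bundle differs, after boundary twists, from the naive $\omega^k(\sum a_ix_i)\otimes\mathcal{O}(D)$, a question governed by the tropical/logarithmic geometry of $\mathsf{Pic}_{g,n}^\theta$ and by genericity of $\theta$, which guarantees $\mathsf{Pic}_{g,n}^\theta$ is a fine moduli space. One must also verify that $b_*\widetilde{\mathsf{aj}}_A^*(\mathsf{w})$ is independent of the chosen resolution; this follows from the projection formula together with the compatibility of the classes under further log modifications, so that the construction descends to a well-defined tautological class on $\Mgn$.
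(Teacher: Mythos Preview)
Your proposal is correct and follows essentially the same approach as the paper: resolve $\mathsf{aj}_A$ on a log modification $\DMgn^\theta \to \Mgn$, use base change to identify the pulled-back complex as $R\rho_*$ of a twisted line bundle (your correction divisor $E$ is the paper's $\mathcal{O}(-\alpha)$ for a piecewise linear function $\alpha$ determined by $\theta$), apply Grothendieck--Riemann--Roch, and push forward. The paper's contribution is exactly filling in the step you flag as the main obstacle --- it shows that $Rr_*\mathcal{O}(-\alpha)$ and the Todd class of the modified (quasi-stable) curve are pulled back from the Artin fan and hence contribute only piecewise-polynomial classes, which combined with the formal properties of the tautological ring under log alterations makes the GRR output manifestly lie in $R^\star(\DMgn^\theta)$.
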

The main result in this paper is 
\begin{theorem}
\label{thm: main}
Conjecture \ref{PRvZ conjecture} is true. 
\end{theorem}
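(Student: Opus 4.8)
The plan is to reduce the statement to the tautological nature of certain Chern classes, compute those by Grothendieck--Riemann--Roch, and then absorb the boundary behaviour of the (only rational) Abel--Jacobi section into the logarithmic tautological formalism.

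First I would exploit the determinantal nature of the virtual class. By Thom--Porteous the class $\mathsf{w}_{g,d}^r(\theta)=\Delta_{g-d+r}^{(r+1)}(-R\pi_*\mathcal{L})$ is a fixed universal polynomial --- a Schur determinant --- in the Chern classes $c_j(-R\pi_*\mathcal{L})$. Since Chern classes commute with pullback, it suffices to show that the pullbacks of the individual $c_j(R\pi_*\mathcal{L})$ under $\mathsf{aj}_A$ are tautological on $\Mgn$. Because $\pi$ is flat and proper and the formation of $R\pi_*$ of a perfect complex commutes with base change, pulling back identifies $\mathsf{aj}_A^*R\pi_*\mathcal{L}$ with $R\pi'_*(\mathsf{aj}_A^*\mathcal{L})$, where $\pi'$ is the pullback of the universal curve. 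Applying Grothendieck--Riemann--Roch gives
\[
\operatorname{ch}\!\big(R\pi'_*\,\mathsf{aj}_A^*\mathcal{L}\big)=\pi'_*\!\big(\operatorname{ch}(\mathsf{aj}_A^*\mathcal{L})\cdot\operatorname{Td}(T_{\pi'})\big).
\]
Mumford's computation of the Todd class of the relative tangent complex of a family of nodal curves is entirely tautological, so the whole problem collapses to controlling $c_1(\mathsf{aj}_A^*\mathcal{L})$ and its self-intersections pushed forward along $\pi'$.

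On the open locus $\mathcal{M}_{g,n}$ the section $\mathsf{aj}_A$ is a genuine morphism and $\mathsf{aj}_A^*\mathcal{L}=\omega^k(\sum a_i x_i)\otimes\mathcal{O}(D)$, whose first Chern class $k\psi+\sum a_i[x_i]+[D]$ is manifestly tautological. The genuine difficulty is that $\mathsf{aj}_A$ is only a \emph{rational} section: over the boundary the line bundle $\omega^k(\sum a_i x_i)\otimes\mathcal{O}(D)$ need not be $\theta$-stable, and the section must be resolved. I would resolve it on a logarithmic modification $\nu\colon\RMgn\to\Mgn$ --- a toroidal blowup of boundary strata governed by the tropicalisation of $\theta$ and $A$ --- on which $\mathsf{aj}_A$ extends to an honest morphism $\widetilde{\mathsf{aj}}_A$ landing in $\theta$-stable bundles on a quasi-stable modification of the universal curve. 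On $\RMgn$ the pulled-back bundle acquires boundary corrections of the shape
\[
c_1\!\big(\widetilde{\mathsf{aj}}_A^*\mathcal{L}\big)=\nu^*\!\big(k\psi+\textstyle\sum a_i[x_i]+[D]\big)+\sum_V n_V\,[\mathcal{C}_V],
\]
where the $[\mathcal{C}_V]$ are components of the universal curve supported over boundary strata and the multiplicities $n_V$ are the values of the piecewise-linear twisting function cut out by the stability condition.

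The remaining and central point is that, after the Grothendieck--Riemann--Roch pushforward along $\pi'$ and the proper pushforward $\nu_*$ back to $\Mgn$, these boundary-corrected classes land in $R^*(\Mgn)$. Here I would invoke the machinery of logarithmic tautological classes: the boundary twists $[\mathcal{C}_V]$ and their products are piecewise-polynomial in the sense of the logarithmic intersection theory of $\Mgn$, and the structural theorem (in the spirit of Molcho--Ranganathan and Molcho--Pandharipande--Schmitt) is that the ring generated by the usual tautological classes together with such piecewise-polynomial boundary classes pushes forward under a logarithmic blowdown into the ordinary tautological ring. Combining this with $\nu_*\widetilde{\mathsf{aj}}_A^*=\mathsf{aj}_A^*$ on the relevant classes yields $\mathsf{w}_{g,A,d}^r(\theta)\in R^*(\Mgn)$. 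The hard part, requiring the most care, is exactly this resolution-and-bookkeeping step: one must choose $\nu$ so that $\widetilde{\mathsf{aj}}_A$ is a morphism, identify the quasi-stable universal curve compatibly, pin down the multiplicities $n_V$ as a genuine piecewise polynomial so that the log-tautological pushforward theorem applies, and verify that no non-tautological contributions are introduced over the exceptional loci and that the answer is independent of the chosen resolution.
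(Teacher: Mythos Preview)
Your proposal is correct and follows essentially the same route as the paper: resolve $\mathsf{aj}_A$ on a log modification $\DMgn^\theta$, use base change to identify $(\mathsf{aj}_A^\theta)^*R\pi_*\mathcal{L}$ with $R\rho_*$ of a twist $\omega^k(\sum a_ix_i)\otimes\mathcal{O}(D)\otimes\mathcal{O}(-\alpha)$ for a piecewise-linear $\alpha$, compute the Chern classes by Grothendieck--Riemann--Roch, and then push forward through the log-tautological formalism. The paper packages the ``resolution-and-bookkeeping'' step you flag as the hard part into an axiomatic treatment of the logarithmic tautological ring (stability under $f_*,f^*$, containment of piecewise polynomials, and control of $\mathsf{Td}$ under log alterations), but the underlying argument is the one you outline.
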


\subsection{Logarithmic Chow Rings} In fact, we prove conjecture \ref{PRvZ conjecture} by proving a stronger statement, using ideas from logarithmic geometry. The stack $\Mgn$, together with its divisor $\partial \Mgn= \Mgn - \mathcal{M}_{g,n}$ has the structure of a \emph{toroidal embedding}, or, equivalently, of a \emph{logarithmic scheme}. 

Starting with any pair $(X,D)$ of a smooth DM stack with a normal crossings divisor, the divisor $D$ stratifies $X$. A simple blowup is the blowup of $X$ along a smooth stratum closure. Such a blowup $p:X' \to X$ produces a new pair $(X',D' = p^{-1}(D))$. A blowup obtained by iterating this procedure a finite number of times is called an iterated blowup. A \emph{logarithmic modification} of $(X,D)$ is any proper birational map $p:X' \to X$ which can be dominated by an iterated blowup of $(X,D)$. 

Non-singular logarithmic modifications form an inverse system by refinement: a map $X'' \to X'$ belongs to the system if $X'' \to X$ factors through $X' \to X$. This way, we get a system of rings $\mathsf{CH}(X')$ indexed by Gysin pullback. The \emph{logarithmic} Chow ring of $X$ is defined (\cite{BarrottChow},\cite{MPS}) as 
$$
\log\mathsf{CH}(X,D) := \varinjlim \mathsf{CH}(X')
$$
where $X' \to X$ ranges through the non-singular log modifications of $(X,D)$. The ordinary Chow ring $\mathsf{CH}(X)$ is contained in $\mathsf{logCH}(X,D)$ as a subring, and there is a retraction (which is a group but not a ring homomorphism) $\mathsf{logCH}(X,D) \to \mathsf{CH}(X)$ by pushforward. 

The ring $\log\mathsf{CH}(X,D)$ is almost never finitely generated, but any given element $\gamma \in \log\mathsf{CH}(X,D)$ is determined by a finite amount of data: a (typically non-canonical) choice of a log modification $X' \to X$, and a \emph{representative} $\gamma(X') \in \mathsf{CH}(X')$: a class in $\mathsf{CH}(X')$ that maps to $\gamma$ under the natural inclusion $\mathsf{CH}(X') \to \log\mathsf{CH}(X,D)$. 

Moreover, the infinite generation of $\log \mathsf{CH}(X,D)$ is, relative to $\mathsf{CH}(X)$ a purely combinatorial problem. To every pair $(X,D)$ there is an associated complex of rational polyhedral cones $\Sigma_X$, and log modifications $X' \to X$ correspond bijectively to subdivisions $\Sigma_X' \to \Sigma_X$. The Chow ring $\mathsf{CH}(X')$ is then generated by $\mathsf{CH}(X)$ and the algebra $\mathsf{PP}(\Sigma_X')$ of piecewise polynomial functions on $\Sigma_X'$: the continuous functions which are polynomial on each cone of $\Sigma_X'$. The theory is reviewed in \ref{sec: Preliminaries}. Thus, 

$$
\log\mathsf{CH}(X,D) = \left \langle \mathsf{CH}(X), \varinjlim \mathsf{PP}(\Sigma_X') \right \rangle
$$
is generated by $\mathsf{CH}(X)$ and the algebras of piecewise polynomial functions on all subdivisions $\Sigma_X' \to \Sigma_X$.

When applying the construction to $(\Mgn,\partial \Mgn)$, we get a ring 
$$
\log\mathsf{CH}(\Mgn,\partial \Mgn)
$$
which combines the difficulties of the ordinary Chow ring $\mathsf{CH}(\Mgn)$ with the combinatorial complexity of piecewise polynomials on subdivisions of its cone complex $\Sigma_{\Mgn}$. As the complexity of $\mathsf{CH}(\Mgn)$ is ameliorated when restricting to the tautological ring $R^\star(\Mgn)$, we can also restrict to the \emph{logarithmic tautological ring}

$$
\log R^\star(\Mgn,\partial \Mgn) := \left \langle R^\star(\Mgn),\varinjlim \mathsf{PP}(\Sigma_{\Mgn}') \right \rangle \subset \log \mathsf{CH}(\Mgn,\partial \Mgn).
$$

Our proof of conjecture \ref{PRvZ conjecture} proceeds by first lifting the classes $\mathsf{w}_{g,A,d}^r(\theta)$ to $\log \mathsf{CH}(\Mgn)$. We do so by finding, for each generic stability condition $\theta$, a log modification $p:\DMgn^\theta \to \Mgn$ and a lift $\log \mathsf{w}_{g,A,d}^r(\theta)$ of $\mathsf{w}_{g,A,d}^r(\theta)$ to $\mathsf{CH}(\DMgn^\theta)$. The lift is non-trivial, in the sense that  
\begin{align*}
p_*(\log \mathsf{w}_{g,A,d}^r(\theta)) = \mathsf{w}_{g,A,d}^r(\theta) \\
p^*(\mathsf{w}_{g,A,d}^r(\theta)) \neq \log \mathsf{w}_{g,A,d}^r(\theta).
\end{align*}
We then show 

\begin{theorem}
\label{thm: logtaut}
    The class $\log \mathsf{w}_{g,A,d}^r(\theta)$ is in $\log R^*(\Mgn,\partial \Mgn)$.
\end{theorem}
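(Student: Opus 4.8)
The plan is to reduce the statement to a Grothendieck--Riemann--Roch (GRR) computation on the universal curve over the log modification $\DMgn^\theta$, and then to recognize each resulting term as an element of $\log R^\star(\Mgn,\partial\Mgn)$. First I would exploit the definition of the virtual class as a degeneracy locus: by the Thom--Porteous formula, $\Delta_{g-d+r}^{(r+1)}(-R\pi_*\mathcal L)$ is a fixed universal polynomial (a Schur determinant) in the Chern classes $c_i(R\pi_*\mathcal L)$. On $\DMgn^\theta$ the rational section $\mathsf{aj}_A$ becomes a genuine morphism $\widetilde{\mathsf{aj}}_A$, and flat base change along the Cartesian square relating the universal curve $\pi'\colon\mathcal C'\to\DMgn^\theta$ to $\pi$ gives $\widetilde{\mathsf{aj}}_A^*R\pi_*\mathcal L = R\pi'_*\mathcal M$, where $\mathcal M:=\widetilde{\mathsf{aj}}_A^*\mathcal L$. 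Hence $\log\mathsf{w}_{g,A,d}^r(\theta)$ is the same universal polynomial in the classes $c_i(R\pi'_*\mathcal M)$, and since $\log R^\star$ is a ring it suffices to show that every component of $\operatorname{ch}(R\pi'_*\mathcal M)$ lies in $\log R^\star(\Mgn,\partial\Mgn)$.

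Second, I would apply GRR to $\pi'$, writing
\[
\operatorname{ch}(R\pi'_*\mathcal M)=\pi'_*\big(\operatorname{ch}(\mathcal M)\cdot\operatorname{Td}(T_{\pi'})\big).
\]
The relative Todd class of the quasi-stable universal curve is built from $c_1(\omega_{\pi'})$ and the class of the nodal locus, so it is tautological; the content lies in $c_1(\mathcal M)$. The key geometric input is the shape of $\mathcal M$ on the log modification: over the interior $\mathcal M_{g,n}$ it equals $\omega^k(\sum a_i x_i)\otimes\mathcal O(D)$, but $\theta$-stability forces a twist by a vertical boundary divisor $E$ on $\mathcal C'$ whose component multiplicities are the values of a piecewise linear function $\phi$ on the cone complex $\Sigma_{\mathcal C'}$, determined by $\theta$ and the subdivision defining $\DMgn^\theta$. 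Thus $c_1(\mathcal M)=k\,c_1(\omega_{\pi'})+\sum a_i[\sigma_i]+[D]+[E]$, in which the first three terms are tautological and $[E]$ is the divisor class attached to $\phi\in\mathsf{PP}(\Sigma_{\mathcal C'})$.

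Finally I would carry out the pushforward. Products of tautological classes on $\mathcal C'$ push forward under $\pi'_*$ to tautological classes on $\DMgn^\theta$, hence to $\log R^\star$, by the usual $\psi$/$\kappa$/boundary formalism. The genuinely new point is the piecewise linear contribution: one must show that $\pi'_*\big([E]^m\cdot(\text{tautological})\big)$ is again a sum of products of tautological classes with piecewise polynomial functions on $\Sigma_{\Mgn}$. This should follow from viewing $\Sigma_{\mathcal C'}$ as a family of cones over $\Sigma_{\DMgn^\theta}$, under which integration along the fibres of $\pi'$ sends piecewise polynomials to piecewise polynomials, so that all terms of $\operatorname{ch}(R\pi'_*\mathcal M)$ land in $\log R^\star$.

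The hard part will be exactly this last step: identifying the piecewise linear twist $\phi$ explicitly from $\theta$, and proving that the fibrewise pushforward of powers of $[E]$ — paired with the Todd and tautological factors — is piecewise polynomial on the base. This demands a careful reading of the combinatorics of the subdivision cutting out $\DMgn^\theta$, together with a clean GRR-type pushforward statement for piecewise polynomial classes along the universal curve. By contrast, the purely tautological pieces are routine, so the combinatorial compatibility of $\pi'_*$ with $\mathsf{PP}$ is the crux of the argument.
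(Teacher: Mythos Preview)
Your proposal follows essentially the same route as the paper: Thom--Porteous plus base change reduces to the Chern classes of $R\pi'_*\mathcal M$, GRR computes these, and $c_1(\mathcal M)$ splits into a tautological part and a piecewise linear twist $\alpha$. You have also correctly located the crux in the pushforward step.

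One point needs correcting. It is \emph{not} true that integration along $\pi'$ sends piecewise polynomials to piecewise polynomials: already for the stable curve $\overline{\mathcal M}_{g,n+1}\to\Mgn$, the class $[\Gamma,\psi_h]$ arises as the pushforward of a piecewise polynomial on a nodal stratum, yet is not a piecewise polynomial on $\Mgn$ (only $\psi_h+\psi_{h'}$ is). What must be shown instead---and what the paper establishes---is that $\pi'_*$ preserves the subring generated by piecewise polynomials \emph{together with} pullbacks of tautological classes. The paper proves this by a strata analysis: a normalized stratum of $\mathcal C'$ dominating one of $\DMgn^\theta$ is either a curve over a curve, a node over a node, or a $\mathbb P^1$-bundle over a node, and in each case one reduces to the original tautological axiom $f_*R^\star(Y)=R^\star(T)$. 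The paper also streamlines your GRR step by first applying the projection formula along the contraction $r\colon\mathcal C'\to \overline{\mathcal C}_{g,n}\times_{\Mgn}\DMgn^\theta$, so that the twist enters only through $\mathrm{ch}(Rr_*\mathcal O(-\alpha))$, which is visibly piecewise polynomial since it is pulled back from the Artin fan; GRR is then applied to the pulled-back stable curve, whose Todd class is the pullback of $\mathsf{Td}(f)$.
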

Paradoxically, the stronger theorem \ref{thm: logtaut} is easier to prove than \ref{thm: main}. The reason is that the refined classes $\log \mathsf{w}_{g,A,d}^r(\theta)$ have more structure than the classes $\mathsf{w}_{g,A,d}^r(\theta)$. Our proof here goes by observing that, contrary to $\mathsf{w}_{g,A,d}^r(\theta)$, the class $\log \mathsf{w}_{g,A,d}^r(\theta)$ is a degeneracy locus on $\DMgn^\theta$, and so can be calculated by Grothendieck-Riemann-Roch. We then show that every term that appears in the Grothendieck-Riemann-Roch calculation is either a piecewise polynomial term or a pullback of a tautological class. A complete calculation of this class via the methods of this paper is possible, but is far subtler, and is the subject of the afforementioned forthcoming work with Abreu and Pagani. 

A special case of this construction, when $d=r=0$ and $\theta$ is generic and small (i.e. appropriately close to the $0$ stability condition) yields the \emph{double ramification cycle} $\mathsf{DR}_{g,A}$. In fact the representative of the lift $\log \mathsf{w}_{g,A,0}^0(\theta)$ on $\DMgn^\theta$ is precisely the lift $\mathsf{DR}_{g,A}(\theta)$ of \cite{HMPPS} representing the logarithmic double ramification cycle $\log \mathsf{DR}_{g,A}$ -- which is also a refinement of the double ramification cycle with better functorial properties. As an application, we obtain a fourth proof (after \cite{MRan,HS,HMPPS}) of 

\begin{conjecture}\cite[Conjecture C, slightly reinterpreted]{MPS} \cite[Theorem]{MRan,HS,HMPPS}
    The logarithmic double ramification cycle is in $\log R^*(\Mgn,\partial \Mgn)
    $. 
\end{conjecture}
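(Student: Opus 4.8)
The plan is to obtain the statement as an immediate corollary of Theorem~\ref{thm: logtaut}, by specializing the Brill--Noether parameters to $d = r = 0$ and taking $\theta$ generic and small. The first step is to record the geometric content of the degree-zero, rank-zero locus: for $d = r = 0$ the virtual class $\Delta_{g}^{(1)}(-R\pi_*\mathcal{L})$ cuts out the locus where a degree-zero line bundle acquires a section, which over an integral curve forces it to be trivial. Pulling this class back along the Abel--Jacobi section $\mathsf{aj}_A$, normalized so that $\omega^k(\sum a_i x_i)\otimes\cO(D)$ has degree zero, produces exactly the double ramification cycle, so that $\mathsf{w}_{g,A,0}^0(\theta) = \mathsf{DR}_{g,A}$.

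The second step is to invoke the identification, built into the construction of the logarithmic lift, that the representative $\log\mathsf{w}_{g,A,0}^0(\theta)$ on $\DMgn^\theta$ coincides with the refined double ramification class $\mathsf{DR}_{g,A}(\theta)$ of \cite{HMPPS}, and that this refinement represents $\log\mathsf{DR}_{g,A}$ inside $\log\mathsf{CH}(\Mgn,\partial\Mgn)$. Granting these two identifications, the conclusion is simply Theorem~\ref{thm: logtaut} read at $(d,r) = (0,0)$: that theorem places $\log\mathsf{w}_{g,A,0}^0(\theta)$ in $\log R^*(\Mgn,\partial\Mgn)$, and membership transports to $\log\mathsf{DR}_{g,A}$ along the equality of the two classes.

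The main obstacle is therefore not any new tautological or Grothendieck--Riemann--Roch computation --- all of that is already internal to Theorem~\ref{thm: logtaut} --- but rather the verification that $\log\mathsf{w}_{g,A,0}^0(\theta)$ really equals $\log\mathsf{DR}_{g,A}$. Concretely, I would check that a small generic $\theta$ forces the Abel--Jacobi section to land in the same quasi-stable model underlying the \cite{HMPPS} construction, so that the degeneracy-locus description on $\DMgn^\theta$ matches their logarithmic double ramification cycle on the nose. Once this comparison of models is in place, the statement follows with no further argument.
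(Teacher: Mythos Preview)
Your proposal is correct and follows essentially the same approach as the paper: specialize Theorem~\ref{thm: logtaut} to $d=r=0$ with small generic $\theta$, identify $\log\mathsf{w}_{g,A,0}^0(\theta)$ with the representative $\mathsf{DR}_{g,A}(\theta)$ of $\log\mathsf{DR}_{g,A}$, and conclude. The only difference is in how the identification is carried out: rather than a direct comparison of quasi-stable models, the paper first invokes \cite{KPH} to see that $\mathsf{w}_0^0(\theta)=[0]$ in $\mathsf{Pic}_{g,n}^\theta$, and then uses the universal double ramification formalism of \cite{BHPSS} together with \cite{HMPPS} to show $\mathsf{DR}_{g,A}^\theta=(\mathsf{aj}_A^\theta)^*[0]$, whence $\mathsf{DR}_{g,A}^\theta=\log\mathsf{w}_{g,A,0}^0(\theta)$.
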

 While the fact that $\mathsf{DR}_{g,A} \in R(\Mgn)$ has been known for a while (see \cite{JPPZ,FabPan}), the refined version $\mathsf{DR}_{g,A}(\theta) \in \log R^*(\Mgn,\partial \Mgn)$ is rather recent and has several new implications. The reason is that the intersections of the cycles $\mathsf{DR}_{g,A}(\theta)$ relate to the logarithmic Gromov-Witten theory of any toric variety, while $\mathsf{DR}_{g,A}$ only relates to the Gromov-Witten theory of $\mathbb{P}^1$. We do not wish to discuss these implications in detail here, and instead refer the reader to the discussion in \cite{HMPPS,MRan,HS}. We simply mention that the essence of these implications has the following form: the pushforward of the virtual class of the space of (rubber) relative stable maps to a toric variety with respect to its toric boundary to $\Mgn$ can be calculated (\cite{HPS},\cite{RanProduct},\cite{MRan}) as
\begin{equation*}
p_*(\prod_{i} \log \mathsf{DR}_{g,A_i})
\end{equation*}
for appropriate choices of $A_i$, where $p: \log \mathsf{CH}(\Mgn,\partial \Mgn) \to \mathsf{CH}(\Mgn)$
is the retraction. No description as a product of ordinary double ramifiction cycles $\mathsf{DR}_{g,A_i}$ exists. Thus, knowledge that $\log \mathsf{DR}_{g,A}$ is in $\log R^{\star}(\Mgn,\partial\Mgn)$ is enough to conclude that the classes are tautological -- and formulas for the $\log \mathsf{DR}_{g,A}$ provide formulas for the virtual classes-- but knowledge that $\mathsf{DR}_{g,A} \in R^\star(\Mgn)$ alone does not.  \\

 \subsection*{Acknowledgments} I'm grateful to Rahul Pandharipande, Alex Abreu, and Dhruv Ranganathan for several discussions related to the double ramification cycle and the Thom-Porteous formula, and for helpful comments on this paper. Special thanks are due to Nicola Pagani, for introducing me to the problem, countless inexplicably patient explanations on the tautological ring, and a very thorough reading of various preliminary drafts. The paper is all the better for their inputs. I was supported by the grant ERC-2017-AdG-786580-MACI.

\section{Preliminaries}
\label{sec: Preliminaries}
The proof of Theorem \ref{thm: main} makes use of the toroidal structure of the Jacobians $\mathsf{Pic}_{g,n}^\theta$ and $\Mgn$. We collect here the notions and results we need from the theory, but warn the reader that we assume them known and do not attempt to review them. Thorough discussion can be found in \cite{KKMS, K} for foundations on log/toroidal geometry, \cite{AW,CCUW} for foundations on cone complexes, and \cite{MPS,HMPPS,MRan} for the intersection theoretic aspects. The latter sources include a practical review of the foundations along the lines that we use in this paper, so the interested and practically-minded reader can simply start there. 

In this paper we work with toroidal embeddings $(X,D)$: a pair of a variety (or, more generally, algebraic stack) $X$ and a divisor $D$ which, smooth locally looks like a toric variety with its torus invariant divisor. The key example will be a smooth variety $X$ with a normal crossings divisor $D$, but we will need to consider singular toroidal embeddings at some point. Toroidal embeddings have a natural log structure given by the divisor $D$, and via this log structure they can be identified with the \emph{log smooth} log schemes over the base field. We will thus use the term ``toroidal" and ``log smooth" interchangeably, prefering the latter when we do not want to carry the divisor $D$ in the notation. Morphisms of toroidal embeddings for us are the maps that respect this log structure, and we refer to them as log maps. 

We write $\Sigma_X$ for the cone stack with integral structure associated to $(X,D)$, and $\mathcal{A}_X$ for its Artin fan. Recall that $\mathcal{A}_X$ is an Artin stack and $X$ comes with a canonical smooth map 
$$
\alpha_X:X \to \mathcal{A}_X
$$

A map $f:(X,D) \to (Y,E)$ is called log smooth if the associated map of log schemes is log smooth. In practice, this means that the map, which is a priori locally modelled on maps of toric varieties, is modelled on dominant maps of toric varieties. A compact translation of this condition is as follows: after replacing $X$ with a smooth cover, the natural map $\alpha_X \times f$ in the diagram  
\[
\begin{tikzcd}
X \ar[r,"\alpha_X \times f"] \ar[rd,"f"] & \mathcal{A}_X \times_{\mathcal{A}_Y} Y \ar[r] \ar[d] & \mathcal{A}_X \ar[d] \\ & Y \ar[r,"\alpha_Y"] & \mathcal{A}_Y
\end{tikzcd}
\]
is smooth. 

We recall the three elements of the theory that we will need: 

\underline{1. Piecewise Polynomials:} Let $(X,D)$ be a pair of a smooth variety (or DM stack) $X$ with a normal crossings divisor $D$. The algebra of piecewise polynomial function $\mathsf{PP}(\Sigma_X)$ is identified with $\mathsf{CH}(\mathcal{A}_X)$, and thus comes with a graded ring homomorphism \cite{MPS,MRan,HS} 
$$
\mathsf{PP}(\Sigma_X) \to \mathsf{CH}(X).
$$
In this paper, we will adopt the following convention:

\begin{definition}
We write $\mathsf{PP}(X)$ for the image of $\mathsf{PP}(\Sigma_X)$ in $\mathsf{CH}(X)$. 
\end{definition}
We refer to elements of $\mathsf{PP}(X)$ as piecewise polynomials on $X$. The algebra $\mathsf{PP}(X)$ is essentially\footnote{As the closed strata of $(X,D)$ need not be normal, normalization may be required to talk about their normal bundles.} the subalgebra of $\mathsf{CH}(X)$ generated by the strata of $(X,D)$ and the Chern roots of their normal bundles.

\underline{2. Semistable Reduction} The second piece of information we need from $\Sigma_X$ is its relation with \emph{logarithmic alterations}. We have 

\begin{itemize}
\item Subdivisions of $\Sigma_X$ correspond to logarithmic modifications $X' \to X$. These are proper and birational, and are isomorphisms over $X-D$. 
\item Integral substructures of the integral structure correspond to roots $X' \to X$. These are proper, bijective, and of DM-type, and are isomorphisms over $X-D$. 
\end{itemize}

For foundations on roots, the reader may consult \cite{BV,Mss,GMtor}. We mention however that roots do not change the rational Chow groups of $X$, and are only necessary in this paper as a technical crutch, in order to apply the semistable reduction theorem, which should realistically be used as a black box for our purposes. We recall how the correspondence goes: the category of cone stacks is equivalent to the category of Artin fans. Thus, either type of map $\Sigma_X' \to \Sigma_X$ corresponds to a map of Artin fans $\mathcal{A}_X' \to \mathcal{A}_X$. The log modification or root corresponding to the subdivision or integral substructre $\Sigma_X' \to \Sigma_X$ is then 
$$
X' := X \times_{\mathcal{A}_X} \mathcal{A}_X' \to X.
$$

\begin{definition}
A logarithmic alteration $X' \to X$ is a composition of logarithmic modifications and roots. 
\end{definition}

Logarithmic alterations are in particular proper and birational. It is shown in \cite{MPS} that for a logarithmic modification $p: X' \to X$, we have 
$$
p_*\mathsf{PP}(X') = \mathsf{PP}(X).
$$ 
Since roots do not change rational Chow groups, the same result is also true for roots. Thus, we have 

\begin{corollary}
Let $p: X' \to X$ be a logarithmic alteration. Then $p_*\mathsf{PP}(X') = \mathsf{PP}(X)$.
\end{corollary}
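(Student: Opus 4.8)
The plan is to reduce the statement to the two elementary building blocks out of which a logarithmic alteration is assembled, and then to propagate the conclusion along the composition using functoriality of proper pushforward. By the definition preceding the corollary, any logarithmic alteration $p \colon X' \to X$ factors as a finite chain
$$
X' = X_m \xrightarrow{q_m} X_{m-1} \xrightarrow{q_{m-1}} \cdots \xrightarrow{q_1} X_0 = X,
$$
in which each $q_i$ is either a logarithmic modification or a root of the toroidal embedding $X_{i-1}$. Every intermediate $X_i$ is again a toroidal embedding, so $\mathsf{PP}(X_i) \subset \mathsf{CH}(X_i)$ is defined, and $p_* = (q_1)_* \circ \cdots \circ (q_m)_*$. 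I would therefore prove $(q_i)_* \mathsf{PP}(X_i) = \mathsf{PP}(X_{i-1})$ for each single step and conclude by downward induction on $m$: feeding $\mathsf{PP}(X_m)$ through the chain, each application of $(q_i)_*$ converts $\mathsf{PP}(X_i)$ into $\mathsf{PP}(X_{i-1})$, so after $m$ steps one arrives at $\mathsf{PP}(X_0) = \mathsf{PP}(X)$.

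For the two single-step cases, the modification case is precisely the result of \cite{MPS} recalled just above: when $q_i$ is a logarithmic modification, $(q_i)_* \mathsf{PP}(X_i) = \mathsf{PP}(X_{i-1})$ holds directly. It then remains to treat the root case, which I expect to be where the only genuine content lies. Here $q_i \colon X_i \to X_{i-1}$ is proper, bijective, of DM type, and an isomorphism away from the boundary, and it induces an isomorphism on rational Chow groups; hence $(q_i)_*$ is an isomorphism $\mathsf{CH}(X_i) \xrightarrow{\sim} \mathsf{CH}(X_{i-1})$. The point that must actually be checked is not merely this isomorphism but that it carries the subalgebra $\mathsf{PP}(X_i)$ onto $\mathsf{PP}(X_{i-1})$.

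The hard part, then, is to verify the root case compatibility, and I would resolve it by working on the side of cone complexes. A root corresponds to an \emph{integral substructure} $\Sigma'_X \to \Sigma_X$, that is, a refinement of the lattice that leaves the underlying rational polyhedral cone complex unchanged, and in particular leaves unchanged the cones on which a function is required to be polynomial. Because we work with rational coefficients, the algebra of piecewise polynomials depends only on the rational cone complex and not on the chosen lattice, so the refinement furnishes an identification $\mathsf{PP}(\Sigma_{X_i}) \cong \mathsf{PP}(\Sigma_{X_{i-1}})$ compatible with the map $\mathcal{A}_{X_i} \to \mathcal{A}_{X_{i-1}}$ of Artin fans. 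Tracing this identification through the descriptions $\mathsf{PP}(X_i) = \operatorname{im}\big(\mathsf{CH}(\mathcal{A}_{X_i}) \to \mathsf{CH}(X_i)\big)$ together with the isomorphism on rational Chow groups yields $(q_i)_* \mathsf{PP}(X_i) = \mathsf{PP}(X_{i-1})$, settling the root case. With both single-step cases established, the inductive composition argument of the first paragraph completes the proof.
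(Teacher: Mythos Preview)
Your proposal is correct and follows the same route as the paper: decompose the alteration into logarithmic modifications and roots, invoke \cite{MPS} for the modification steps, and use the fact that roots do not change rational Chow groups (together with the identification of piecewise polynomials under an integral substructure) for the root steps. The paper compresses this into a single sentence before stating the corollary, whereas you spell out the induction and the root-case compatibility more carefully, but the argument is the same.
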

 
 We now review the semistable reduction theorem. A log smooth log map $X \to S$ is called \emph{weakly semistable} if it is flat with reduced fibers. The strongest version\footnote{In fact, minor modifications of the significantly simpler results of \cite{AK,Mss} are also sufficient for our purposes.} of semistable reduction is then:    

\begin{theorem}\cite{ALT}[Semistable Reduction]
Let $X \to S$ be a log smooth and surjective map between toroidal embeddings. Then, there exist a log alteration $S' \to S$, and a log alteration $X' \to X \times_S S'$ such that $X' \to S'$ is weakly semistable, and so that $X',S'$ are smooth. 
\end{theorem}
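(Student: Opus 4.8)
The plan is to reduce the theorem to a purely combinatorial statement about the cone complexes $\Sigma_X$ and $\Sigma_S$, and then to solve that statement by the toric/polyhedral algorithm of Abramovich--Karu, reinterpreted through the dictionary between log alterations and subdivisions recalled above. The first point is that all three conditions in the conclusion --- weak semistability of $X' \to S'$ (flatness with reduced fibers) together with smoothness of $X'$ and $S'$ --- can be read off the associated map of cone complexes $\Sigma_X \to \Sigma_S$. Concretely, using the characterization of log smoothness via the smooth chart $\alpha_X \times f$ onto $\mathcal{A}_X \times_{\mathcal{A}_Y} Y$, one checks that the fibers are reduced exactly when, for each cone $\sigma \in \Sigma_X$ mapping to $\tau \in \Sigma_S$, the induced lattice map $N_\sigma \to N_\tau$ is surjective; that the map is equidimensional --- hence flat, once the total space is Cohen--Macaulay and the base smooth --- exactly when every cone of $\Sigma_{X'}$ maps \emph{onto} a cone of $\Sigma_{S'}$; and that smoothness of $X'$, $S'$ corresponds to unimodularity of every cone. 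Since subdivisions realize log modifications and integral substructures realize roots, producing the alterations $S' \to S$ and $X' \to X \times_S S'$ amounts to producing subdivisions (with integral substructure) of $\Sigma_S$ and of $\Sigma_X \times_{\Sigma_S} \Sigma_{S'}$ with these properties.

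With the problem combinatorialized, I would proceed in the order of Abramovich--Karu. First, \emph{resolve the base}: choose a smooth subdivision $\Sigma_{S'} \to \Sigma_S$, which always exists by toric resolution of singularities at the level of fans, and replace $\Sigma_X$ by the fiber product $\Sigma_X \times_{\Sigma_S} \Sigma_{S'}$, now a cone complex over the smooth base $\Sigma_{S'}$. Second, perform the \emph{equidimensionalization} step, subdividing the total-space complex so that every cone surjects onto a cone of $\Sigma_{S'}$. Third, \emph{resolve the total space} by a further smooth subdivision, chosen compatibly with the base so as not to destroy equidimensionality. Finally, \emph{achieve reduced fibers}: the remaining obstruction is the index of the lattice maps $N_\sigma \to N_\tau$, killed by a Kummer rescaling of the base lattice, realized geometrically by a root $S' \to S$. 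This is precisely why the conclusion is phrased in terms of log alterations (modifications together with roots) rather than modifications alone, and why roots were introduced as a technical device earlier in the section.

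The main obstacle is the equidimensionalization step and its interaction with smoothness. Making a map of fans equidimensional by subdivision is not a formal operation: one must subdivide the total space so that no cone drops in dimension, compatibly with --- and projectively over --- the base subdivision, and this subdivision will in general reintroduce singularities, forcing another round of resolution that must then be shown not to break equidimensionality. The substance of the Abramovich--Karu argument, which I would invoke, is exactly that these operations can be interleaved and made to terminate, producing simultaneously a smooth total space, a smooth base, an equidimensional map, and --- after the final root --- reduced fibers. Miracle flatness over the smooth base then upgrades equidimensionality to flatness, giving weak semistability.

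The functorial strengthening of \cite{ALT} goes further by making all of these subdivisions canonical and compatible with strict base change, but for the existence statement as phrased here the classical combinatorial argument already suffices, and that is the version I would write out.
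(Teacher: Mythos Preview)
The paper does not give its own proof of this statement: it is quoted as a black-box result from \cite{ALT}, with a footnote remarking that the simpler results of \cite{AK,Mss} would already suffice. There is therefore no paper-proof to compare against.

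That said, your outline is a faithful sketch of the Abramovich--Karu argument underlying the cited result. You correctly identify that all three conditions (flatness with reduced fibers, smoothness of source and target) are combinatorial in the cone complexes, and your four-step program --- resolve the base, equidimensionalize, resolve the total space, root to kill lattice indices --- is exactly the structure of \cite{AK}. You are also right that the delicate point is the interaction between equidimensionalization and smooth resolution, and that the substance of the argument lies in showing these can be interleaved to terminate. The one place where your sketch is slightly glib is the claim that resolving the total space can be done ``compatibly with the base so as not to destroy equidimensionality'': this is precisely the non-formal step, and in practice one uses barycentric subdivision or a similar canonical procedure to guarantee it. But as a high-level plan your proposal is correct and matches the literature the paper is citing.
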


\underline{Twistors:} The third and final tool we need from logarithmic geometry is that of a piecewise linear function. This is in fact an instance of the map between piecewise polynomials and the Chow ring, but it is special enough that we mention it separately. The degree one piecewise polynomials on $\Sigma_X$ are called the piecewise linear functions, and come with a homomorphism 
$$
\mathsf{PL}(\Sigma_X) \to \mathsf{Pic}(X).
$$
Given a piecewise linear function $\alpha$, we write $\mathcal{O}_X(-\alpha)$ for the associated line bundle on $X$. Their importance in our context is that, given a generically smooth family of curves $C \to S$, with $S$ a trait, given their canonical log structures, then the line bundles of the form $\mathcal{O}_C(\alpha)$ are precisely the limits of the trivial bundle on $C_\eta$. Over an arbitrary base $(S,D)$, the line bundles $\mathcal{O}_C(\alpha)$ are the equivalence classes of the Cartier divisors supported on the boundary of $C$. 

\section{Strata Homology}
When the pair $(X,D)$ is toroidal but potentially singular, it is shown in \cite{MRan} that the algebra $\mathsf{PP}(\Sigma_X)$ captures the \emph{operational} Chow ring of the Artin fan $\mathcal{A}_X$, 
$$
\mathsf{PP}(\Sigma_X) \cong \mathsf{CH}^{\textup{op}}(\mathcal{A}_X).
$$
For our present purposes, we mostly need to study $\mathsf{PP}(X)$ from a homological perspective. Thus, we would like to also capture the properties of the ordinary Chow groups of $\mathcal{A}_X$. To that end, we simply define:

\begin{definition}
Let $(X,D)$ be a toroidal embedding, and $\alpha: X \to \mathcal{A}_X$ the map to its Artin fan. The \emph{strata homology} groups of $X$ are 
\[
\SH (X) = \alpha^*\mathsf{CH}(\mathcal{A}_X).
\]
\end{definition}
We currently do not know of a truly pleasant geometric description of $\SH (X)$, along the lines of the normally decorated strata classes of \cite{MPS}, although we find it probable that such a description can be obtained. At the moment all we know are two essentially obvious descriptions, one ``intrinsic", and one ``extrinsic". 

The intrinsic description is obtained by excision. Recall that the Chow groups of a variety that is stratified by affine spaces are generated by the closures of the strata. Of course, since $\mathcal{A}_X$ is a stack, this does not hold anymore, but an intermediate statement can be obtained. Recall that the strata of $X$ are in bijection with the strata of $\mathcal{A}_X$ are in bijection with the strata of $\Sigma_X$. For a cone $\sigma \in \Sigma_X$, we write $\mathcal{X}_\sigma$ for the corresponding stratum. We have 
$$
\mathcal{X}_\sigma = B(\mathbb{G}_m^k \rtimes G)
$$
for some $k$ and some finite subgroup $G \subset \mathfrak{S}_k$ of the symmetric group on $k$ elements. We write $\overline{\mathcal{X}}_\sigma$ for the closure of $\mathcal{X}_\sigma$, and $Z_\sigma = \overline{\mathcal{X}}_\sigma - \mathcal{X}_\sigma$. Then, excision says that the sequence 
\[
\begin{tikzcd}
\mathsf{CH}(Z_\sigma) \ar[r] & \mathsf{CH}(\overline{\mathcal{X}}_\sigma) \ar[r] & \mathsf{CH}(\mathcal{X}_\sigma) \ar[r] & 0
\end{tikzcd}
\]
is exact -- in fact, the argument in \cite[Section 5]{MPS} shows that the sequence is also injective on the left. Since $\mathcal{X}_\sigma$ is smooth, we have that
$$
\mathsf{CH}(\mathcal{X}_\sigma) = \mathsf{P}(\sigma)
$$
is isomorphic to the polynomial functions on $\sigma$. We \emph{choose} a section $s_\sigma: \mathsf{CH}(\mathcal{X}_\sigma) \to \mathsf{CH}(\overline{\mathcal{X}}_\sigma)$ arbitrarily, and find 
$$
\mathsf{CH}(\overline{\mathcal{X}}_\sigma) = \left \langle s_\sigma(\mathsf{P}(\sigma)),(i_{Z_\sigma})_*(\mathsf{CH}(Z_\sigma) \right \rangle.
$$
Inductively, a similar description holds for $Z_\sigma$. The minimal strata are closed, and since they are smooth, their operational Chow groups agree with their Chow groups. Thus, we find
\begin{lemma}
    The Chow group $\mathsf{CH}(\mathcal{A}_X)$ is generated by  
\[(i_{\overline{\mathcal{X}}_\sigma})_*(s_\sigma(\mathsf{P}(\sigma)).
\]
\end{lemma}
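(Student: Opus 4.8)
The plan is to build up $\mathsf{CH}(\mathcal{A}_X)$ by a descending Noetherian induction along the depth filtration of the stratification, with the excision sequence recorded above furnishing the inductive step. Index the strata by $\dim\sigma$, which is the codimension (depth) of $\mathcal{X}_\sigma$ in $\mathcal{A}_X$; recall that $\mathcal{X}_\tau \subseteq \overline{\mathcal{X}}_\sigma$ exactly when $\sigma$ is a face of $\tau$, so the zero cone gives the dense stratum with $\overline{\mathcal{X}}_0 = \mathcal{A}_X$, while the maximal cones give the closed strata. For each $j$ set $\mathcal{A}_X^{\geq j} = \bigcup_{\dim\sigma \geq j}\overline{\mathcal{X}}_\sigma$, a closed substack, so that $\mathcal{A}_X^{=j} := \mathcal{A}_X^{\geq j}\setminus \mathcal{A}_X^{\geq j+1} = \bigsqcup_{\dim\sigma = j}\mathcal{X}_\sigma$ is open in $\mathcal{A}_X^{\geq j}$ and is a \emph{disjoint} union of the smooth strata of that depth. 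The base case is the top depth $j_{\max}$: there $\mathcal{A}_X^{\geq j_{\max}}$ is a disjoint union of closed smooth strata $\mathcal{X}_\sigma = B(\mathbb{G}_m^k\rtimes G)$, hence $\mathsf{CH}(\mathcal{A}_X^{\geq j_{\max}}) = \bigoplus_\sigma \mathsf{P}(\sigma)$, and the claim holds with $s_\sigma = \mathrm{id}$.

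For the inductive step, assume $\mathsf{CH}(\mathcal{A}_X^{\geq j+1})$ is generated by the classes $(i_{\overline{\mathcal{X}}_\tau})_* s_\tau(\mathsf{P}(\tau))$ with $\dim\tau \geq j+1$. The right-exact localization sequence for the closed substack $\mathcal{A}_X^{\geq j+1}\subseteq \mathcal{A}_X^{\geq j}$ with open complement $\mathcal{A}_X^{=j}$ reads
$$\mathsf{CH}(\mathcal{A}_X^{\geq j+1}) \to \mathsf{CH}(\mathcal{A}_X^{\geq j}) \to \mathsf{CH}(\mathcal{A}_X^{=j}) = \bigoplus_{\dim\sigma = j}\mathsf{P}(\sigma) \to 0.$$
Because all groups are $\mathbb{Q}$-vector spaces, each summand $\mathsf{P}(\sigma) = \mathsf{CH}(\mathcal{X}_\sigma)$ lifts through a section $s_\sigma$ into $\mathsf{CH}(\overline{\mathcal{X}}_\sigma)$; pushing forward along $\overline{\mathcal{X}}_\sigma\hookrightarrow \mathcal{A}_X^{\geq j}$ and using that $\mathcal{X}_\sigma$ is open in $\mathcal{A}_X^{\geq j}$ (so open restriction undoes the proper pushforward), the classes $(i_{\overline{\mathcal{X}}_\sigma})_* s_\sigma(\mathsf{P}(\sigma))$ surject onto $\bigoplus_{\dim\sigma = j}\mathsf{P}(\sigma)$, a class supported on $\mathcal{X}_\sigma$ restricting to $0$ on the other (disjoint) depth-$j$ strata. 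Hence any $\gamma\in \mathsf{CH}(\mathcal{A}_X^{\geq j})$ can be corrected by such a combination into the image of $\mathsf{CH}(\mathcal{A}_X^{\geq j+1})$; functoriality of proper pushforward turns the inductive generators of the latter into generators of the required form. Taking $j = 0$ yields $\mathsf{CH}(\mathcal{A}_X) = \mathsf{CH}(\mathcal{A}_X^{\geq 0})$ generated by all the $(i_{\overline{\mathcal{X}}_\sigma})_* s_\sigma(\mathsf{P}(\sigma))$. This is exactly the per-closure induction indicated in the excerpt ($\overline{\mathcal{X}}_\sigma$ playing the role of $\mathcal{A}_X^{\geq j}$ and $Z_\sigma$ that of $\mathcal{A}_X^{\geq j+1}$), organized by depth so that strata of equal depth are disjoint and their contributions split as a direct sum.

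The only genuine inputs are the localization sequence for these quotient stacks and the identifications $\mathsf{CH}(\mathcal{X}_\sigma) = \mathsf{P}(\sigma)$; the left-injectivity from \cite[Section~5]{MPS} is not needed for generation (right-exactness alone gives the sections $s_\sigma$ and the surjection), though it is what makes the resulting spanning set a basis once the choices are fixed. The main point requiring care is therefore formal rather than deep: one must check that proper pushforward along the closed inclusions is compatible with restriction to the open strata, so that the depth-$j$ correction terms are simultaneously available and act independently across the disjoint strata of a fixed depth. Beyond correctly invoking these standard exact sequences for Kresch's Chow groups of Artin stacks, I do not anticipate a substantial obstacle.
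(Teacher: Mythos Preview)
Your argument is correct and is essentially the same as the paper's: both run the excision/localization sequence together with the identification $\mathsf{CH}(\mathcal{X}_\sigma)=\mathsf{P}(\sigma)$ and induct on the depth of the stratification, with the closed (minimal) strata as the base case. The only difference is packaging---you organize the induction via the global filtration $\mathcal{A}_X^{\geq j}$, while the paper phrases it per stratum closure using $Z_\sigma=\overline{\mathcal{X}}_\sigma\setminus\mathcal{X}_\sigma$---and you already note this equivalence yourself.
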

The description shows in particular that 
$$
\SH X \subset \oplus_{\sigma \in \Sigma_X} \mathsf{P}(\sigma)
$$
For an element $F \in \mathsf{P}(\sigma)$, the element $s_{\sigma}(F)$ may be thought of as the closure of a cycle representing $F$.  Choosing these cycles makes this description impractical. A more practical extrinsic description can be obtained, this time by choice of a desingularization: 

\begin{lemma}
Let $(X,D)$ be a toroidal embedding, and $p:(X',D') \to (X,D)$ a desingularization by a log alteration. Then 
$$
\SH X = p_*\mathsf{PP}(X').
$$
\end{lemma}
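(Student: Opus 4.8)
The plan is to transport the statement to a purely combinatorial question about Artin fans by base change, and then to prove a surjectivity statement for pushforward along the morphism of Artin fans induced by $p$. First I would record the relevant square. By definition of a log alteration, $X' = X\times_{\mathcal{A}_X}\mathcal{A}_{X'}$, so writing $q:\mathcal{A}_{X'}\to\mathcal{A}_X$ for the induced map, and $\alpha = \alpha_X$, $\alpha' = \alpha_{X'}$, the square with vertical maps $p,q$ and horizontal maps $\alpha,\alpha'$ is cartesian. Here $\alpha,\alpha'$ are smooth (hence flat), and since $p$ is a log alteration, $q$ is proper. Because $X'$ is smooth, the first Lemma of this section applied to $X'$, together with the description of $\mathsf{PP}(X')$ as the subalgebra of $\mathsf{CH}(X')$ generated by the strata and the Chern roots of their normal bundles, identifies $\mathsf{PP}(X') = \alpha'^*\mathsf{CH}(\mathcal{A}_{X'})$: the generators $(i_{\overline{\mathcal{X}}'_{\sigma'}})_* s_{\sigma'}(\mathsf{P}(\sigma'))$ of $\mathsf{CH}(\mathcal{A}_{X'})$ pull back under the smooth map $\alpha'$ to exactly such decorated strata classes.

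Applying the compatibility of proper pushforward with flat pullback to the cartesian square, $\alpha^* q_* = p_*\alpha'^*$, I obtain $p_*\mathsf{PP}(X') = p_*\alpha'^*\mathsf{CH}(\mathcal{A}_{X'}) = \alpha^*\big(q_*\mathsf{CH}(\mathcal{A}_{X'})\big)$, while by definition $\SH(X) = \alpha^*\mathsf{CH}(\mathcal{A}_X)$. Thus it suffices to prove that $q_*\colon\mathsf{CH}(\mathcal{A}_{X'})\to\mathsf{CH}(\mathcal{A}_X)$ is surjective; note that no injectivity of $\alpha^*$ is required.

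To prove surjectivity I would first reduce to the case of a subdivision: a log alteration is a composition of log modifications and roots, pushforward is functorial, and roots induce isomorphisms on rational Chow, so I may assume $\Sigma_{X'}\to\Sigma_X$ is a subdivision. I would then show that each generator $(i_{\overline{\mathcal{X}}_\sigma})_* s_\sigma(F)$, with $F\in\mathsf{P}(\sigma)$, lies in the image of $q_*$, by downward induction on $\dim\sigma$. Fix $\sigma$ and choose a cone $\sigma'\in\Sigma_{X'}$ with $\sigma'\subseteq\sigma$, $\dim\sigma'=\dim\sigma$ and $\mathrm{relint}\,\sigma'\subseteq\mathrm{relint}\,\sigma$; then $q$ restricts to a proper birational map $q_\sigma:\overline{\mathcal{X}}'_{\sigma'}\to\overline{\mathcal{X}}_\sigma$ that is an isomorphism over the open stratum $\mathcal{X}_\sigma$, since $\sigma'$ and $\sigma$ share the same saturated span. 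Working with rational coefficients, the transfer for the finite automorphism groups lets me lift $F$ to $F'\in\mathsf{P}(\sigma')$ so that $(q_\sigma)_* s_{\sigma'}(F')$ restricts to $F$ over $\mathcal{X}_\sigma$. Consequently $q_*\big((i_{\overline{\mathcal{X}}'_{\sigma'}})_* s_{\sigma'}(F')\big)$ equals $(i_{\overline{\mathcal{X}}_\sigma})_* s_\sigma(F)$ modulo classes pushed forward from the deeper boundary $Z_\sigma = \overline{\mathcal{X}}_\sigma\setminus\mathcal{X}_\sigma$, that is, from strata $\overline{\mathcal{X}}_\tau$ with $\dim\tau>\dim\sigma$. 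These correction terms lie in the image of $q_*$ by the inductive hypothesis, and for $\sigma$ of maximal dimension $Z_\sigma=\varnothing$, giving the base case.

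I expect the main obstacle to be this second step, and within it the bookkeeping of the lower-stratum corrections: one must verify that the difference between $(q_\sigma)_* s_{\sigma'}(F')$ and $s_\sigma(F)$ is genuinely supported on deeper strata so that the downward induction closes up, and one must handle the finite groups $G\subset\mathfrak{S}_k$ occurring in $\mathcal{X}_\sigma = B(\mathbb{G}_m^k\rtimes G)$ through the rational transfer, so that the polynomial decorations $F$ can indeed be lifted to $\sigma'$. By contrast, the base change reduction of the first step is formal once the cartesian square and the identification $\mathsf{PP}(X')=\alpha'^*\mathsf{CH}(\mathcal{A}_{X'})$ are in hand.
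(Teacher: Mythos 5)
Your proof is correct and takes essentially the same route as the paper: the Cartesian square over the Artin fans, the compatibility $p_*(\alpha')^* = \alpha^* q_*$ for proper pushforward against flat pullback, and the identification $\mathsf{PP}(X') = (\alpha')^*\mathsf{CH}(\mathcal{A}_{X'})$ for smooth $X'$. The only difference is that you spell out the surjectivity of $q_*\colon \mathsf{CH}(\mathcal{A}_{X'}) \to \mathsf{CH}(\mathcal{A}_X)$ by a downward induction on strata, a point the paper leaves implicit (it follows from properness, surjectivity and birationality of $q$ with rational coefficients).
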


\begin{proof}
We have a Cartesian diagram 
\[
\begin{tikzcd}
X' \ar[r,"\alpha'"] \ar[d,"p"] & \mathcal{A}_X' \ar[d,"q"] \\ 
X \ar[r,"\alpha"] & \mathcal{A}_X
\end{tikzcd}
\]
with smooth horizontal maps. Thus the equality follows from the compatibility 
$$
p_*(\alpha')^* = \alpha^*q_*
$$
and the equality 
$$
(\alpha')^*\mathsf{CH}(\mathcal{A}_X') = \mathsf{PP}(X').
$$
\end{proof}
The same proof in fact suffices shows also: 

\begin{lemma}
\label{cor:pushforward}
Let $p:X' \to X$ be a log alteration. Then 
$$
p_* \SH (X') = \SH (X).
$$
\end{lemma}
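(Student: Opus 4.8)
The plan is to run the same fiber-square argument used for the previous lemma, the only difference being that here $X'$ is not assumed smooth, so $\SH(X')$ must be read off its definition rather than identified with $\mathsf{PP}(X')$. Writing $\alpha = \alpha_X$, $\alpha' = \alpha_{X'}$, and $q : \mathcal{A}_X' \to \mathcal{A}_X$ for the map of Artin fans induced by the log alteration, the square
\[
\begin{tikzcd}
X' \ar[r,"\alpha'"] \ar[d,"p"] & \mathcal{A}_X' \ar[d,"q"] \\
X \ar[r,"\alpha"] & \mathcal{A}_X
\end{tikzcd}
\]
is Cartesian — this is exactly how the log alteration $X' = X \times_{\mathcal{A}_X} \mathcal{A}_X'$ was constructed, and it persists under composition — with smooth horizontal and proper vertical maps. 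By definition $\SH(X') = (\alpha')^*\mathsf{CH}(\mathcal{A}_X')$, so the base-change compatibility $p_*(\alpha')^* = \alpha^* q_*$ for this square yields
\[
p_*\SH(X') = p_*(\alpha')^*\mathsf{CH}(\mathcal{A}_X') = \alpha^* q_*\, \mathsf{CH}(\mathcal{A}_X').
\]

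The statement therefore reduces entirely to the surjectivity $q_*\mathsf{CH}(\mathcal{A}_X') = \mathsf{CH}(\mathcal{A}_X)$: granting this, the right-hand side becomes $\alpha^*\mathsf{CH}(\mathcal{A}_X) = \SH(X)$ and we are finished. I expect this surjectivity to be the only real point, and it holds because $q$ is proper and surjective (a subdivision induces a proper birational map of Artin fans, an integral substructure a proper bijective one, and a log alteration is a composite of these). For rational Chow groups, proper surjective pushforward is surjective by the standard ``dominate and divide by the degree'' argument: a generator is supported on a stratum closure $\overline{\mathcal{X}}_\sigma$, one selects a component of its preimage in $\mathcal{A}_X'$ mapping generically finitely onto it, and rescales the pushforward. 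The mild obstacle is that $\mathcal{A}_X$ is not of finite type, so I cannot quote the finite-type statement verbatim; I would instead use the explicit generators $(i_{\overline{\mathcal{X}}_\sigma})_* s_\sigma(\mathsf{P}(\sigma))$ of $\mathsf{CH}(\mathcal{A}_X)$ recorded above, together with noetherian induction on the finite poset of strata of $\Sigma_X$, propagating the lifts through the excision sequences already exploited earlier.

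Alternatively, and perhaps more cleanly, one can bypass the direct surjectivity check by reducing to the previous lemma. Choose a desingularization $r : Y \to X'$ by a log alteration; then $p \circ r : Y \to X$ is again a log alteration with $Y$ smooth, hence a desingularization of $X$. Applying the previous lemma twice gives $\SH(X') = r_*\mathsf{PP}(Y)$ and $\SH(X) = (p \circ r)_*\mathsf{PP}(Y)$, whence
\[
p_*\SH(X') = p_* r_* \mathsf{PP}(Y) = (p \circ r)_*\mathsf{PP}(Y) = \SH(X).
\]
This packages the surjectivity of $q_*$ into the already-established previous lemma, so the logical weight still rests on that point, but it keeps the present argument to pure functoriality of proper pushforward.
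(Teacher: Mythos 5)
Your first argument is exactly the paper's proof: the paper disposes of this lemma by saying ``the same proof suffices,'' meaning the Cartesian square over the Artin fans, the base-change identity $p_*(\alpha')^*=\alpha^*q_*$, and the (implicit) surjectivity of $q_*$ on rational Chow groups, all of which you spell out correctly. Your alternative reduction via a common desingularization $Y\to X'\to X$ is also valid and is only a repackaging of the same ingredients, so there is nothing further to flag.
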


For any piecewise polynomial $\mathsf{PP}(\Sigma_X)$, we can get a graded group homomorphism 
$$
\mathsf{PP}(\Sigma_X) \to \mathsf {CH}_*(X)
$$
by 
$$
F \mapsto [X] \cap F
$$
even for singular $X$. We continue to write $\mathsf{PP}(X)$ for the image of $\mathsf{PP}(X)$ in $\mathsf{CH}(X)$ in the general case as well. Since the map $\mathsf{PP}(\Sigma_X) \to \mathsf{CH}(X)$ factors through $\mathsf{CH}(\mathcal{A}_X)$, we have 
$$
\mathsf{PP}(X) \subset \SH (X).
$$
Equality holds whenever $X$ is smooth, or, more generally, quasi-smooth -- meaning $\Sigma_X$ is simplicial. In particular, we have 

\begin{corollary}
\label{cor: pullback}
Let $p:X' \to X$ be a log alteration and assume that $X$ is smooth. Then 
$$
p^*\SH X \subset \SH X'.
$$
\end{corollary}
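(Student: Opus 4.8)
The plan is to run the same Cartesian square
\[
\begin{tikzcd}
X' \ar[r,"\alpha'"] \ar[d,"p"] & \mathcal{A}_X' \ar[d,"q"] \\
X \ar[r,"\alpha"] & \mathcal{A}_X
\end{tikzcd}
\]
that appears in the proofs of the preceding lemmas, and to exploit the fact that \emph{operational} classes — unlike honest cycles — pull back along the map of Artin fans $q$ with no flatness hypothesis whatsoever. First I would record where smoothness of $X$ is used. It enters twice: it guarantees that $\SH(X) = \mathsf{PP}(X)$ (as observed above), so that every $\xi \in \SH(X)$ can be written as $\xi = c \cap [X]$ for a piecewise polynomial $c \in \mathsf{PP}(\Sigma_X) = \mathsf{CH}^{\textup{op}}(\mathcal{A}_X)$; and it guarantees that the Gysin pullback $p^* \colon \mathsf{CH}(X) \to \mathsf{CH}(X')$ is defined at all, via the regular embedding $\Gamma_p \colon X' \to X' \times X$ (regular precisely because $X$ is smooth).

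Then I would compute $p^*\xi$ directly. Since $q \circ \alpha' = \alpha \circ p$, the operational class $c$ acts on $\mathsf{CH}(X')$ through either route, and functoriality of operational pullback gives $c \cap [X'] = (q^*c) \cap [X']$ with $q^*c \in \mathsf{CH}^{\textup{op}}(\mathcal{A}_X') = \mathsf{PP}(\Sigma_{X'})$. The crux is then the identity
\[
p^*(c \cap [X]) = c \cap p^*[X] = c \cap [X'],
\]
whose two equalities I would justify separately: the first is the compatibility of any operational (bivariant) class with the refined Gysin homomorphisms out of which $p^*$ is built; the second is $p^*[X] = [X']$, which holds because $p$ is proper and birational with $X$ smooth, so $\Gamma_p^!$ of the fundamental class of $X' \times X$ is the fundamental class of the graph. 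Combining, $p^*\xi = (q^*c) \cap [X']$, which lies in $\mathsf{PP}(X')$ by definition, and $\mathsf{PP}(X') \subseteq \SH(X')$ as already noted. This in fact yields the slightly sharper statement $p^*\SH(X) \subseteq \mathsf{PP}(X')$.

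The step I expect to require the most care is the compatibility $p^*(c \cap [X]) = c \cap p^*[X]$. One must check that the lci pullback $p^*$ is genuinely a composite of flat pullback and a refined Gysin map, and that $c$, regarded throughout as an operational class on $\mathcal{A}_X$ rather than as an honest cycle, commutes with each of these; the point of keeping $c$ operational is exactly that $q$ need not be flat, so no ordinary pullback $q^*$ on cycles would be available. The identity $p^*[X]=[X']$ is routine but should be stated, as it is the only place the birationality of the log alteration is used. Finally, I note that no separate treatment of modifications versus roots is needed: both are realized as $X' = X \times_{\mathcal{A}_X} \mathcal{A}_X'$ and hence sit in the Cartesian square above, so the argument is uniform across all log alterations.
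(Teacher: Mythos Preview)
Your proposal is correct and follows essentially the same route as the paper: use smoothness of $X$ to identify $\SH X = \mathsf{PP}(X)$, then invoke the functoriality of $\mathsf{PP}$ under Gysin pullback. The paper states this in one line, while you unpack the functoriality via the operational formalism on the Artin-fan Cartesian square; your version is more detailed but not genuinely different.
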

\begin{proof}
This follows from the isomorphism $\SH X = \mathsf{PP}(X)$ and the fact that $\mathsf{PP}(X)$ is functorial with respect to Gysin pullback.  
\end{proof}

\begin{remark}
We note that in corollary \ref{cor: pullback}, we have that 
$$
p^*\mathsf{PP}(X) \subset \mathsf{PP}(X')
$$
for arbitrary $X$. However, the pullback on $\SH X$ does not make sense for arbitrary $X$, as $\SH X$ is a subgroup of Chow homology. 
\end{remark}

\subsection{Normalizations of Strata} We will need a slight generalization of the notion of strata homology that applies to individual strata of $X$ as well. Recall (\cite[Section 5]{MPS}) that the closures of the strata of a toroidal embedding $(X,D)$ need not be normal, and have an associated monodromy group $G$. Given a strata closure $\overline{X}_\sigma \subset X$, we will write $\widetilde{X}_\sigma$ for its normalization, and $Y_\sigma$ for the associated $G$-torsor over $\widetilde{X}_\sigma$. We will call the various monodromy torsors $Y_\sigma$ the \underline{normalized strata} of $X$. This is a misnomer for many reasons; for one, even when no normalization and monodromy issues are present, the $Y_\sigma$ are strata \emph{closures}, not strata -- in particular, a normal stratum is not a normalized stratum unless its closed -- but we keep the convention to avoid writing ``monodromy torsor over the normalization of stratum closure" everywhere. 

As is the case for the strata themselves, the normalized strata $Y_\sigma$ are also base changed from the Artin fan: we have a Cartesian diagram 
\[
\begin{tikzcd}
Y_\sigma \ar[r,"\alpha"] \ar[d] & \mathcal{Y}_\sigma \ar[d] \\ 
\overline{X}_\sigma \ar[r] \ar[d] & \overline{\mathcal{X}}_\sigma \ar[d] \\ 
X \ar[r] & \mathcal{A}_X
\end{tikzcd}
\]
where $\mathcal{Y}_\sigma$ is the monodromy torsor over the normalization of the closure $\overline{\mathcal{X}}_\sigma$ of the stratum $\mathcal{X}_\sigma$. 

\begin{definition}
Let $Y_\sigma$ be a normalized stratum. Its strata homology is 
$$
\SH (Y_\sigma) := \alpha^*\mathsf{CH}(\mathcal{Y}_\sigma)
$$
and its ring of piecewise polynomials is 
$$
\mathsf{PP}(Y_\sigma) := \alpha^* \mathsf{CH}_{\textup{op}}(\mathcal{Y}_\sigma). 
$$
\end{definition}
The argument showing that $\SH (X)$ is generated inductively by piecewise polynomials on strata and strata homology groups of deeper strata is easily adapted to strata homology for normalized strata as well. We note that log alterations of $X$ pull-back to log alterations of $Y_\sigma$, but are no longer birational, although they will still be proper and surjective. Nevertheless, the results \ref{cor:pushforward}, \ref{cor: pullback} still hold, with the same proof. We note
\begin{lemma}
\label{lem: ppsurjective}
The group $\SH X$ is generated by the push-forwards of the groups $\SH Y_\sigma$, as a $Y_\sigma$ ranges through the normalized strata of $X$. 
\end{lemma}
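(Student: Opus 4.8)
The plan is to reduce the statement to the corresponding assertion on the Artin fan $\mathcal{A}_X$, where it follows from the intrinsic excision description established above. Write $j_\sigma \colon Y_\sigma \to X$ for the proper composite $Y_\sigma \to \overline{X}_\sigma \hookrightarrow X$, and $\mathfrak{j}_\sigma \colon \mathcal{Y}_\sigma \to \mathcal{A}_X$ for its Artin fan counterpart $\mathcal{Y}_\sigma \to \overline{\mathcal{X}}_\sigma \to \mathcal{A}_X$. The vertical rectangle in the Cartesian diagram defining $Y_\sigma$ exhibits
\[
\begin{tikzcd}
Y_\sigma \ar[r,"\alpha"] \ar[d,"j_\sigma"] & \mathcal{Y}_\sigma \ar[d,"\mathfrak{j}_\sigma"] \\
X \ar[r,"\alpha"] & \mathcal{A}_X
\end{tikzcd}
\]
as a fiber square whose horizontal maps are smooth and whose vertical maps are proper. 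The compatibility of smooth pullback with proper pushforward then yields $\alpha^* (\mathfrak{j}_\sigma)_* = (j_\sigma)_* \alpha^*$, exactly as in the proof that $\SH X = p_*\mathsf{PP}(X')$. Consequently, if $\mathsf{CH}(\mathcal{A}_X)$ is generated by the subgroups $(\mathfrak{j}_\sigma)_* \mathsf{CH}(\mathcal{Y}_\sigma)$, then applying $\alpha^*$ shows that $\SH X = \alpha^*\mathsf{CH}(\mathcal{A}_X)$ is generated by the $(j_\sigma)_* \alpha^*\mathsf{CH}(\mathcal{Y}_\sigma) = (j_\sigma)_* \SH Y_\sigma$, which is the desired conclusion.

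I would next prove the Artin fan statement. By the excision lemma above, $\mathsf{CH}(\mathcal{A}_X)$ is generated by the classes $(i_{\overline{\mathcal{X}}_\sigma})_* s_\sigma(\mathsf{P}(\sigma))$, and in particular by the subgroups $(i_{\overline{\mathcal{X}}_\sigma})_* \mathsf{CH}(\overline{\mathcal{X}}_\sigma)$. It therefore suffices to see that every class on the stratum closure $\overline{\mathcal{X}}_\sigma$ is pushed forward from the normalized stratum $\mathcal{Y}_\sigma$. The structure map $\nu_\sigma \colon \mathcal{Y}_\sigma \to \overline{\mathcal{X}}_\sigma$ is finite and surjective, being a monodromy torsor followed by a normalization, so with rational coefficients $(\nu_\sigma)_*$ is surjective: any subvariety of $\overline{\mathcal{X}}_\sigma$ is the finite-degree image of a component of its preimage and is thus recovered, up to a positive scalar, by pushing that component forward. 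Hence $(i_{\overline{\mathcal{X}}_\sigma})_* \mathsf{CH}(\overline{\mathcal{X}}_\sigma) = (i_{\overline{\mathcal{X}}_\sigma})_* (\nu_\sigma)_* \mathsf{CH}(\mathcal{Y}_\sigma) = (\mathfrak{j}_\sigma)_* \mathsf{CH}(\mathcal{Y}_\sigma)$, since $\mathfrak{j}_\sigma = i_{\overline{\mathcal{X}}_\sigma} \circ \nu_\sigma$. This establishes the Artin fan statement, and with the reduction above, the lemma. Alternatively, one could bypass the reduction and rerun the excision induction directly on the normalized strata, using that \ref{cor:pushforward} and \ref{cor: pullback} persist in that setting as noted; this gives the same generation but makes the role of the chosen sections $s_\sigma$ less transparent.

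The main obstacle I expect is not conceptual but a matter of bookkeeping at precisely the points where the monodromy and non-normality subtleties flagged in the definition of normalized strata must be confronted: verifying that $(\nu_\sigma)_*$ is genuinely surjective with rational coefficients in the stacky, possibly non-normal setting—one must check that the normalization and the monodromy torsor together assemble into a single finite surjection, so that the elementary degree argument applies componentwise—and confirming that the base-change identity $\alpha^*(\mathfrak{j}_\sigma)_* = (j_\sigma)_* \alpha^*$ is licensed by the fiber square having smooth horizontal legs. Both are routine given the Cartesian diagram already recorded for $Y_\sigma$, but they are where the actual work lies.
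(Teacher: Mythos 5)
Your proof is correct and follows essentially the same route as the paper's: reduce to the Artin fan via the Cartesian square with smooth horizontal legs, invoke the excision-based generation of $\mathsf{CH}(\mathcal{A}_X)$ by the stratum closures, and use surjectivity of pushforward along $\mathcal{Y}_\sigma \to \overline{\mathcal{X}}_\sigma$. The only cosmetic difference is that the paper justifies that surjectivity by calling this map proper and birational, whereas you argue via finiteness, surjectivity and rational coefficients --- which is, if anything, the more robust phrasing given the degree contributed by the monodromy torsor.
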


\begin{proof}
This is immediate from the excision sequence and the observation that since $\mathcal{Y}_\sigma \to \overline{\mathcal{X}}_\sigma$ is proper and birational, 
$$
\mathsf{CH}(\mathcal{Y}_\sigma) \to \mathsf{CH}(\overline{\mathcal{X}}_\sigma)
$$
is surjective. 
\end{proof}

We remark however that the resulting presentation is highly redundant.  

\begin{example}[Piecewise polynomials on $\Mgn$]
Let $S = \Mgn$, and $X = \overline{\mathcal{M}}_{g,n+1}$ be its universal curve. Write $f$ for the map $X \to S$. Strata of $S$ correspond to stable graphs of type $(g,n)$. We write $v$ for the vertices of $\Gamma$, which correspond to components $X_v$ of $X \to S$, $e$ for its edges, which correspond to nodes $X_e$, $h$ for its half edges, and $l$ for its $n$ legs, which correspond to the $n$ markings. Then the normalized strata of $\Mgn$ are precisely the moduli spaces 
$$
\overline{\mathcal{M}}_{\Gamma} = \prod_{v} \overline{\mathcal{M}}_{g(v),\textup{val}(v)}
$$
where $g(v)$ is the genus of the vertex $v$ and $\textup{val}(v)$ is its valence. The codimension of the stratum equals the number of edges $E$. If $i_\Gamma: \overline{\mathcal{M}}_{\Gamma} \to \Mgn$ denotes the natural map, the piecewise polynomials of $\Mgn$ are precisely the ring generated by 
$$
(i_\Gamma)_*F(c_1(N_1),\cdots, c_1(N_E))
$$
where $N_i$ are the direct summands the normal bundle of $i_\Gamma$, which splits on $\overline{\mathcal{M}}_{\Gamma}$, and $F$ is any $G := \textup{Aut}(\Gamma)$-invariant polynomial. In fact, the summands correspond to the edges $e$ of $\Gamma$, with  
$$
N_e = T_{h} \otimes T_{h'}
$$
-- the tensor product of the normal bundles of the node corresponding to the edge $e$ in the two components that contain it, which is precisely the data encoded by the two half-edges $h,h'$ whose union is $e$. The standard notation for this term is 
$$
(i_\Gamma)_*F(c_1(N_1),\cdots,c_1(N_E)) = [\Gamma, F(\psi_{h_1}+\psi_{h'_1},\cdots,\psi_{h_E}+\psi_{h'_E})]
$$
The strata of $\overline{\mathcal{M}}_{g,n+1}$ are in bijection with pairs $(\Gamma, c)$, where $c$ is either a vertex or an edge of $\Gamma$. Geometrically, pairs $(\Gamma,v)$ correspond to components of the relative curve $X \to S$, and $(\Gamma,e)$ with nodes. The normal bundles of the components $X_v$ in $X$ are simply the pullbakcs of the the normal bundles of $\overline{\mathcal{M}}_{\Gamma}$ in $\Mgn$. The normal bundles of the nodes $N_e$ have rank $1$-higher: the summand $N_e$ corresponding to the node $X_e$ is replaced by 
$$
T_{h} \oplus T_{h'}
$$
The pushforward of $c_1(T_{X_h})$ (resp. $c_1(T_{X_{h'}})$ all the way to $\Mgn$ is the term denoted by 
\[
[\Gamma,\psi_h] (\textup{ (resp. } [\Gamma,\psi_{h'}])
\]
reflecting the equality 
$$
c_1(N_e) = c_1(T_h)+c_1(T_{X_{h'}})
$$
Thus, the two classes $[\Gamma, \psi_h]$, $[\Gamma, \psi_{h'}]$ are \emph{not}  piecewise polynomials on $\Mgn$, but their sum is. Additional classes on $\overline{\mathcal{M}}_{\Gamma}$ are also of central interest: first, the restriction of the class of the log canonical $\omega_{X/\Mgn}$ to $X_v$; the pushforward of its $a+1$st power to $\Mgn$ is denote by 
\[
[\Gamma,\kappa_a^v]\footnote{Often reference to $v$ is left implicit, but is important for us to make explicit.}
\]
The push-forward of the square of the divisor $D_i$ corresponding to the $i$-th marking is on the other hand denoted by 
\[
[\Gamma,\psi_i]
\]
The classes $[\Gamma,\kappa_a^v],[\Gamma,\psi_i],[\Gamma,\psi_h],[\Gamma,\psi_{h'}]$ together generate the \emph{tautological ring} of $\Mgn$. We note that the classes split into two types: pushforwads of piecewise polynomials from normalized strata of $\overline{\mathcal{M}}_{g,n+1}$, and classes that are obtained by restricting classes from the total space of the universal family $\overline{\mathcal{M}}_{g,n+1}$ (rather than from individual normalized strata) to normalized strata, and pushing forward.  
\end{example}

\section{The Logarithmic Tautological Ring}

Our starting point in this section is a genus $g$, $n$-marked stable logarithmic curve $f: X \to S$: this means a log smooth morphism which is flat with geometrically reduced one dimensional fibers, which is Deligne-Mumford stable. We will also assume that $X$ and $S$ are smooth. The main and most interesting example is of course $X = \overline{\mathcal{M}}_{g,n+1},S=\Mgn$, but if anything, the added generality simplifies the notation. We define the tautological rings $R^\star(S),R^\star(X)$ of $S$ and $X$ to be the pull-back of the tautological ring from $\Mgn,\overline{\mathcal{M}}_{g,n+1}$. Furthermore, every normalized stratum $T$ of $S$, or $Y$ of $X$, map to corresponding normalized strata of $\Mgn,\overline{\mathcal{M}}_{g,n+1}$, and so we can define $R^\star(T),R^\star(Y)$ by pullback once more. We note that if $Y$ is a normalized stratum of $X$ dominating a normalized stratum $T$ of $S$, the restriction of $f$ to $Y \to T$ is either an isomorphism or a family of curves with smooth total space. We will keep writing $f$ for the restriction when the context is clear.   

Just as for $\Mgn$, we can write a generating set for $R^\star(S)$ in terms of the dualizing sheaf of $X \to S$, or the markings on $X$. We will however not do so; our point of view in this paper is that to prove theorem \ref{thm: main}, detailed knowledge of $R^\star$ is not required, but rather its formal properties, namely:

\underline{Data:} For every normalized stratum $Y$ of $X$ dominating a normalized stratum $T$ of $S$, graded subrings 
\begin{itemize}
\item $R^\star(T) \subset \mathsf{CH}(T)$ 
\item $R^\star(Y) \subset \mathsf{CH}(Y)$
\end{itemize}
satisfying \label{axioms} \underline{properties:}
\begin{enumerate}
\item $\mathsf{PP}(T) \subset R^\star(T)$, $\mathsf{PP}(Y) \subset R^\star(Y)$. 
\item $f_*R^\star(Y) = R^\star(T)$. 
\item $f^*R^\star(T) \subset R^\star(Y)$. 
\item The Chern class $c_1(T_f)$ of the relative tangent bundle are in $R^\star(Y)$.
\item The rings $R^\star(T)$ can be generated by $\mathsf{PP}(Y)$ and classes in $R^\star(X)$ as $i_Y: Y \to X$ ranges through all normalized strata of $X$ dominating $T$, by restricting and pushing forward, i.e.  
$$
R^\star(T) = \left \langle f_*(\mathsf{PP}(Y)), f_*(i_Y)^*R^\star(X) \right \rangle.
$$
\end{enumerate}
\begin{remark}
\label{remark: compatibility}
The last property is the most exotic, but powerful. A consequence is the much more intuitive compatibility of the system with respect to inclusions of normalized strata, in the following sense: for every morphism $i: P \to T$ of normalized strata, we have $i_*R^{\star}(P) \subset R^\star(T)$ and $i^*R^\star(T) \subset R^\star(P)$. To see this, choose normalized strata $Z$,$Y$ dominating $P,T$ and fitting into a commutative diagram 
\[
\begin{tikzcd}
Z \ar[d,"f|_Z"] \ar[r,"j"] & Y \ar[d,"f|_Y"] \\
P \ar[r,"i"] & T.
\end{tikzcd}
\]
Since $R^\star(P)$ is generated by classes of the form $\beta=(f|_{Z})_*F \cdot (i_Z)^*\gamma$ for $F \in \mathsf{PP}(Z), \gamma \in R^\star(X)$, we see that $i_*\beta = (f|_{Y})_*((j_*F)\cdot (i_Y)^*\gamma \cdot [Z])$. Since $j_*$ maps piecewise polynomials to piecewise polynomials, and the fundamental class of $Z$ is a piecewise polynomial on $Y$, we have that $(j_*F)\cdot (i_Y)^*\gamma \cdot [Z]$ is in $R^\star(Y)$, and so $i_*\beta \in R^\star(B)$. 

The analogous compatibility for maps of strata of $X$ follows by applying the same argument to the universal family over $X$!
\end{remark}

Intuitively, the last property says that the rings $R^\star(T)$ do not contain truly new classes relative to $R^\star(S)$: only those that are forced by the log geometry when restricting the map $X \to S$ to $T$. In effect, the data of the rings $R^\star(T),R^\star(Y)$ for normalized strata are redundant: we can recover everything from $R^\star(X)$ and $R^\star(S)$, and the map $X \to S$, together with the usual operations of intersection theory.

Let now $p:X' \to X$ be a log alteration. For every stratum $Y'$ of $X'$, surjecting onto a stratum $Y$ of $X$. We define 
\begin{definition}
\label{def: lift}
\begin{align*}
R^\star(Y') = \left \langle p^*R^\star(Y), \mathsf{PP}(Y') \right \rangle \\
R_\star(Y') = \left \langle p^*R^\star(Y),\SH (Y')\right \rangle
\end{align*}
to be the system generated by the pullbacks of the tautological ring on $X$ and piecewise polynomials or strata homology classes. 
\end{definition}
We define rings $R^\star,R_\star$ for log alterations $S' \to S$ similarly. We note that $R^\star$ is a ring, analogous to the operational theory, and $R_\star$ is a module over $R^\star$. When $X',S'$ are smooth, evaluation with the fundamental class yields an isomorphism between $R^\star$ and $R_\star$. In general, the quotient is isomorphic to $\SH X/\mathsf{PP}(X)$ and thus, of combinatorial nature. 

\begin{lemma}
\label{lem:pullbackpushforward}
Let $p:X' \to X$ be a log alteration. For every normalized stratum $Y'$ mapping to $Y$, we have 
\begin{align*}
p_*R_\star(Y') = R_\star(Y) \\
p^*R^\star(Y) \subset R^\star(Y').
\end{align*}
\end{lemma}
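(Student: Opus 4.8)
The plan is to treat the two assertions separately. The inclusion $p^*R^\star(Y) \subset R^\star(Y')$ requires no argument beyond Definition \ref{def: lift}: by construction $R^\star(Y') = \langle p^*R^\star(Y), \mathsf{PP}(Y')\rangle$ is generated as a ring by $p^*R^\star(Y)$ together with $\mathsf{PP}(Y')$, so it contains $p^*R^\star(Y)$ tautologically. All of the content therefore lies in the equality $p_*R_\star(Y') = R_\star(Y)$, which I would prove by a projection formula argument after first recording a normal form for the elements involved.

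The first step is to observe that every element of $R_\star(Y')$ can be written as a finite sum of terms $(p^*r)\cap s$ with $r \in R^\star(Y)$ and $s \in \SH(Y')$. Indeed, $\SH(Y')=(\alpha')^*\mathsf{CH}(\mathcal{Y}')$ is a module under cap product over $\mathsf{PP}(Y')=(\alpha')^*\mathsf{CH}_{\textup{op}}(\mathcal{Y}')$, because $(\alpha')^*$ is pullback along the smooth Artin-fan map and hence intertwines the operational action on ordinary Chow groups; consequently the piecewise polynomial factors in a generator of $R_\star(Y')$ may be absorbed into the strata homology class, leaving only a pullback $p^*r$ of a tautological class from $Y$. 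With this normal form in hand, the inclusion $p_*R_\star(Y') \subseteq R_\star(Y)$ follows from the projection formula: the restriction $p\colon Y'\to Y$ is proper, so $p_*((p^*r)\cap s) = r \cap p_* s$, and $p_* s \in \SH(Y)$ by the normalized-strata version of Lemma \ref{cor:pushforward}, whence $r \cap p_* s \in \langle R^\star(Y),\SH(Y)\rangle = R_\star(Y)$.

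For the reverse inclusion I would use the surjectivity half of that same pushforward statement. Given a generator $r \cap s$ of $R_\star(Y)$ with $r\in R^\star(Y)$ and $s \in \SH(Y)$, choose a lift $\tilde s \in \SH(Y')$ with $p_*\tilde s = s$, which exists since $p_*\SH(Y') = \SH(Y)$; then $(p^*r)\cap\tilde s \in R_\star(Y')$ and the projection formula gives $p_*((p^*r)\cap\tilde s) = r \cap s$. This establishes $R_\star(Y)\subseteq p_*R_\star(Y')$ and completes the proof.

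The only genuinely non-formal input, and hence the main obstacle, is the equality $p_*\SH(Y') = \SH(Y)$ for normalized strata, where $p\colon Y' \to Y$ is proper and surjective but \emph{not} birational. This is what the text records as Lemmas \ref{cor:pushforward} and \ref{cor: pullback} continuing to hold with the same proof, and I would make the point explicit by checking that the Cartesian square over the Artin fan used in the proof of Lemma \ref{cor:pushforward} survives restriction to the normalized strata $\mathcal{Y}' \to \mathcal{Y}$, so that the compatibility $p_*(\alpha')^* = \alpha^* q_*$ still yields both the containment and the surjectivity used above; the failure of birationality is harmless, since at no point do we need $p_*p^* = \mathrm{id}$.
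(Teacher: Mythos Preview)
Your proposal is correct and is precisely the paper's approach spelled out in full: the paper's proof is a single line invoking the projection formula together with Lemmas \ref{cor:pushforward} and \ref{cor: pullback}, and your argument is the detailed unpacking of that sentence. Your explicit verification that the needed statement $p_*\SH(Y') = \SH(Y)$ holds for normalized strata, despite the loss of birationality, is exactly the point the paper alludes to when it remarks that those lemmas continue to hold for normalized strata ``with the same proof.''
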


\begin{proof}
This follows from the projection formula and \ref{cor:pushforward},\ref{cor: pullback}. 
\end{proof}

The analogous statement for $S' \to S$ also holds of course, with the same proof.  

\begin{lemma}
\label{lem: tangent}
Let $p:S' \to S$ and $r:X' \to X \times_S S'$ be log alterations such that the composed map $g: X' \to S'$ is a family of curves, with $X'$ and $S'$ smooth. Then the Chern classes of $T_g$ are in $R^*(X')$.  
\end{lemma}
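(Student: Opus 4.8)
The plan is to identify $T_g$ with the line bundle $\omega_{X'/S'}^\vee$ dual to the relative dualizing sheaf of the family of curves $g$; since this is a line bundle, only $c_1(T_g)$ is nonzero, and by Definition \ref{def: lift} it suffices to write $c_1(T_g)$ as the sum of a class pulled back from $R^\star(X)$ along the composite log alteration $q\colon X' \to X$ and a piecewise polynomial on $X'$. The heart of the matter is the comparison
\[
\omega_{X'/S'} \;\cong\; q^*\omega_{X/S}\otimes \mathcal{O}_{X'}(\alpha)
\]
for a piecewise linear function $\alpha$ on $\Sigma_{X'}$. Granting this I conclude immediately, since $c_1(T_g) = q^*c_1(T_f) - c_1(\mathcal{O}_{X'}(\alpha))$: the first term lies in $q^*R^\star(X)\subset R^\star(X')$ by the relative tangent property (property (4) above) together with Definition \ref{def: lift}, while the second lies in $\mathsf{PP}(X')\subset R^\star(X')$ because piecewise linear functions produce piecewise polynomial classes.

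To establish the comparison I would argue that the two line bundles agree away from the vertical boundary of $X'$. Over the interior $S'\smallsetminus\partial S'$ the family $g$ coincides with $f$ and $q$ is an isomorphism, so the dualizing sheaves agree there; more generally, factoring $q$ as $X' \xrightarrow{r} X\times_S S' \to X$, the relative dualizing sheaf commutes with the base change $X\times_S S' \to X$, while the remaining log alteration $r$ alters the fibers of the family only over the boundary (it inserts semistable components and takes roots). Hence $\omega_{X'/S'}\otimes (q^*\omega_{X/S})^{-1}$ is the class of a Cartier divisor supported on the boundary $D'$ of $X'$, and by the twistor correspondence recalled in Section \ref{sec: Preliminaries} every such class is of the form $\mathcal{O}_{X'}(\alpha)$ for a piecewise linear $\alpha$. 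Conceptually, the cleanest way to see the invariance is that log alterations are log \'etale, so the relative \emph{log} tangent bundle satisfies $T_g^{\log}\cong q^*T_f^{\log}$ exactly; the twist $\alpha$ is precisely the discrepancy between the ordinary and the logarithmic relative dualizing sheaves.

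I expect the main obstacle to be making this comparison precise, i.e.\ controlling the behaviour of the relative dualizing sheaf under the alteration $r$. One must check that $r$ modifies the family only over the boundary — so that the discrepancy is genuinely boundary-supported — which is exactly where the hypothesis that $g$ is again a family of curves with smooth total space is used, and one must invoke base change for the dualizing sheaf along $X\times_S S'\to X$ with care (the fiber product being taken in fs log schemes). Once the discrepancy is known to be a boundary divisor, its identification with a piecewise linear twistor is precisely the statement recalled in the preliminaries, and the tautological membership of $q^*c_1(T_f)$ is immediate from property (4) and the defining generation of $R^\star(X')$.
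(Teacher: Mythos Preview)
Your identification of $T_g$ with the line bundle $\omega_{X'/S'}^\vee$ is the gap. Since $g$ has nodal fibres, $T_g$ is only the two--term perfect complex $[T_{X'}\to g^*T_{S'}]$ of virtual rank $1$, not a line bundle; its higher Chern classes do not vanish in general (for instance $c_2$ sees the relative nodal locus). Your argument therefore establishes only that $c_1(T_g)\in R^\star(X')$, whereas the statement asserts this for all $c_i$, and the application to $\mathsf{Td}(g)$ in the next section genuinely needs the total Chern class.

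The paper argues instead with the two triangles
\[
T_{X'/X}\to T_{X'/S}\to q^*T_{X/S}, \qquad T_{X'/S'}\to T_{X'/S}\to g^*T_{S'/S},
\]
together with the observation that $T_{X'/X}$ and $T_{S'/S}$ are pulled back from the Artin fans (because $X'=X\times_{\mathcal{A}_X}\mathcal{A}_X'$, and likewise for $S'$), so that their total Chern classes are piecewise polynomials. This gives $c(T_g)=q^*c(T_f)$ times a piecewise polynomial factor and hence handles all $c_i$ at once. Your closing ``conceptual'' remark is in fact this same argument in disguise: $T_g^{\log}\cong q^*T_f^{\log}$ because log alterations are log \'etale, and the discrepancies $T_g-T_g^{\log}$ and $T_f-T_f^{\log}$ are pulled back from the Artin fans since $T^{\log}$ is the tangent bundle relative to $\mathcal{A}$. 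Had you carried that line through instead of the $\omega^\vee$ one, you would have arrived at a proof equivalent to the paper's.
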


\begin{proof}
Write $q$ for the composition $X' \to X \times_S S' \to X$. We have a Cartesian diagram 
\[
\begin{tikzcd}
X' \ar[r] \ar[d] & \mathcal{A}_X' \ar[d] \\ 
X \ar[r,"\alpha_X"] & \mathcal{A}_X
\end{tikzcd}
\]
and $\alpha_X$ is smooth. So the relative tangent bundle $T_{X'/X}$ is pulled back from $\mathcal{A}_{X}'$. Similarly, the relative tangent bundle $T_{S'/S}$ is pulled back from $\mathcal{A}_{S'}$. Thus, their Chern classes are piecewise polynomials. As we have exact sequences 
\[
\begin{tikzcd}
0 \ar[r] & T_{X'/X} \ar[r] & T_{X'/S} \ar[r] & q^*T_{X/S} \ar[r] & 0 
\end{tikzcd}
\]
and
\[
\begin{tikzcd}
0 \ar[r] & T_{X'/S'} \ar[r] & T_{X'/S} \ar[r] & g^*T_{S'/S} \ar[r] & 0,
\end{tikzcd}
\]
we find that the Chern classes of $T_{X'/X}$ differ from the Chern classes of $q^*T_{X/S}$ by piecewise polynomials. Since $q^*c(T_{X/S}) \in R^\star(X')$, the claim follows.  
\end{proof}
The same argument shows the analogous statement for any map $Y \to T$ between normalized strata.

\begin{lemma}
\label{lem: generation}
Let $p: S' \to S$ be a log alteration, and write $r:X':= X \times_S S' \to X$ for the pullback of $X$. Then, for every normalized stratum $T' \to S'$,
$$
R_\star(T') \subset \left \langle f_*\SH(Y'), f_*i_{Y'}^*R^\star(X') \right \rangle 
$$
where $i_{Y'}: Y' \to X'$ ranges through normalized strata of $X'$ dominating $T'$.  
\end{lemma}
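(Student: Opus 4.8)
The plan is to unwind the definition of $R_\star(T')$ and treat its two families of generators separately. By Definition \ref{def: lift}, $R_\star(T') = \langle p^*R^\star(T), \SH(T')\rangle$, where $T$ is the normalized stratum of $S$ beneath $T'$; since $\SH(T')$ is a module over $\mathsf{PP}(T') \subseteq R^\star(T')$, every element is a sum of products $p^*(\alpha)\cdot\eta$ with $\alpha \in R^\star(T)$ and $\eta \in \SH(T')$. As the right-hand side is, by construction, closed under multiplication by $R^\star(T')$ and $p^*(\alpha) \in p^*R^\star(T) \subseteq R^\star(T')$, it suffices to show that $p^*R^\star(T)$ and $\SH(T')$ each land inside $\langle f_*\SH(Y'), f_* i_{Y'}^*R^\star(X')\rangle$.

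For the factor $p^*R^\star(T)$ I would invoke property (5) for the original family, $R^\star(T) = \langle f_*\mathsf{PP}(Y), f_* i_Y^*R^\star(X)\rangle$. Because $X' = X\times_S S'$ and $f$ is flat, the square relating the restricted family $g\colon X'_{T'}\to T'$ to $f\colon X_T \to T$ is Cartesian and Tor-independent, so $p^*f_* = g_* r^*$ for the base-change map $r\colon X'_{T'}\to X_T$. Combined with the functorialities $r^*\mathsf{PP}(Y) \subseteq \mathsf{PP}(Y') \subseteq \SH(Y')$ and $r^* i_Y^*R^\star(X) = i_{Y'}^* r^*R^\star(X) \subseteq i_{Y'}^*R^\star(X')$ (using $r^{-1}(Y) = \bigcup Y'$ and $r^*R^\star(X)\subseteq R^\star(X')$), this rewrites each generator of $p^*R^\star(T)$ as an element of the target module. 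Here one only needs to track the decomposition of strata preimages and the multiplicities of $r^*[Y]$, which is routine.

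Handling $\SH(T')$ is the crux. The subtlety is that $\SH(T')$ consists of classes on the base, while the target is built from pushforwards off the curve $X'_{T'}$; naively writing $\eta = \tfrac{1}{2g-2+n}g_*(g^*\eta\cdot c_1(\omega^{\log}_{g}))$ and expanding by the projection formula is circular, since it only recovers $\eta$ from $g^*\eta$. Instead I would realize $\eta$ through a genuinely horizontal normalized stratum. If $T'$ lies in the boundary, a persistent node gives an edge-stratum $Y'_e$ that is finite and surjective over $T'$; if $n\geq 1$ a marking gives a section $Y'_{x_i} \cong T'$. In either case $g|_{Y'}\colon Y' \to T'$ is finite of some degree $d\neq 0$, so by the strata version of Corollary \ref{cor: pullback} ($T'$ smooth) we have $g^*\SH(T') \subseteq \SH(Y')$ and $\eta = \tfrac1d g_*(g^*\eta) \in g_*\SH(Y')$, with $g^*\eta$ an honest strata-homology class carrying no tautological factor — hence non-circular. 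The only remaining case has no horizontal stratum, namely the open stratum with $n=0$, which forces $g\geq 2$; there the cone of $T'$ is trivial, so $\SH(T') = \mathbb{Q}[T']$, the unique dominating stratum $Y'$ is the irreducible total space, and $[T'] = \tfrac{1}{2g-2}(g|_{Y'})_* i_{Y'}^* c_1(\omega_{g}) \in g_* i_{Y'}^*R^\star(X')$, which is legitimate because $c_1(\omega_{g}) \in R^\star(X')$ by property (4).

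The main obstacle is precisely this last step: guaranteeing that base strata-homology classes are realized as pushforwards off the curve in a non-circular way. The essential geometric input is that stability makes the fibrewise degree $2g-2+n$ of the log canonical nonzero, which produces the fundamental class of the exceptional open stratum as a genuine pushforward of a tautological class, while persistent nodes and markings supply the horizontal strata needed to realize all higher strata-homology classes; the possible reducibility of the fibers then only requires summing the contributions over the several components $Y'$ dominating $T'$.
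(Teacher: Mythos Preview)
Your argument is essentially correct, but it takes a needlessly roundabout path compared with the paper. The paper does not split the generator $p^*\gamma\cdot F$ into two pieces; it treats the product in one stroke. Writing $\gamma=f_*(i_Y^*\delta\cdot\epsilon)$ via property~(5) and base-changing gives $p^*\gamma=g_*(i_{Y'}^*r^*\delta\cdot q^*\epsilon)$, and then the factor $F\in\SH(T')$ is absorbed directly by the projection formula:
\[
p^*\gamma\cdot F \;=\; g_*\bigl(i_{Y'}^*r^*\delta\cdot q^*\epsilon\cdot g^*F\bigr),
\]
with $r^*\delta\in R^\star(X')$ and $q^*\epsilon\cdot g^*F\in\SH(Y')$. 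That is the entire proof.

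Your Step~1 already performs the base-change manoeuvre and writes $p^*\gamma$ as a pushforward $g_*(\cdots)$ from a stratum $Y'$; rather than then treating $F$ separately and worrying about whether the right-hand side is closed under external products, you can simply pull $F$ inside that same pushforward via $g_*(\cdots)\cdot F=g_*(\cdots\cdot g^*F)$. This is precisely the paper's move, and it makes your Step~2 superfluous. Your ``circularity'' worry is an artefact of the separation: once the two factors are kept together, the projection formula disposes of $F$ for free, with no need to manufacture horizontal strata or invoke the degree of $\omega^{\log}$. Two minor quibbles: you appeal to ``$T'$ smooth'' for Corollary~\ref{cor: pullback}, but $S'$ is an arbitrary log alteration and $T'$ need not be smooth --- your horizontal-stratum maps are isomorphisms, so the inclusion is trivial there anyway, and in general one uses that the square over the Artin fans is Cartesian with $g$ flat; and your closure-under-multiplication claim for the right-hand side is asserted rather than argued, though it holds under the natural reading of $\langle\cdot\rangle$ as the ring generated.
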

\begin{proof}
Suppose $\beta$ is in $R_\star(T')$. By definition, it has the form $p^*\gamma \cdot F$, for $\gamma \in R^\star(T)$, and $F$ in $\SH(T')$. But $\gamma$ is of the form $f_*(i_{Y}^*\delta \cdot \epsilon)$ for a normalized stratum $i_Y: Y \to X$ dominating $T$, $\delta$ a class on $X$, and $\epsilon \in \mathsf{PP}(Y)$. We pull-back $Y$ to $T'$, and get a diagram 

\[
\begin{tikzcd}
    X' \ar[r,"r"] & X \\
    Y' \ar[r,"q"] \ar[u,"i_{Y'}"] \ar[d,"g"] & Y \ar[u,"i_Y"] \ar[d,"f"] \\
    T' \ar[r,"p"] & T.
\end{tikzcd}
\]
Since we have $g_*q^*=p^*f_*$, it follows that 
\begin{align*}
\beta & = p^*(f_*(i_Y^*\delta \cdot \epsilon))\cdot F = g_*(q^*i_Y^*\delta \cdot q^*\epsilon) \cdot F \\ & = g_*(i_{Y'}^*r^*\delta \cdot q^*\epsilon) \cdot F = g_*(i_{Y'}^*r^*\delta \cdot q^*\epsilon \cdot g^*F).
\end{align*}
and $r^*\delta \in R(X')$, $q_*\epsilon \cdot g^*F \in \SH Y'$. 
 
\end{proof}

In particular, as in Remark \ref{remark: compatibility} we obtain that the rings $R_\star(T')$ are still compatible with respect to inclusions of normalized strata, in the sense that for every inclusion $i:P' \to T'$,  
\begin{align*}
i_*R_\star(P') \subset R_\star(T') \\
i^*R^\star(P) \subset R^\star(P').
\end{align*}
The analogous compatibility for the strata of log alterations of $X'$ comes from considering $X$ as the base of its own universal family, as above. We have written 
$$
R_\star(T') \subset \left \langle f_*\SH(Y'),f_*(i_{Y'})^*R(X') \right \rangle 
$$
because currently we do not know that the right hand side maps into the left hand side. The statement does however hold:

\begin{lemma}
Let $p:S' \to S$ be a log alteration, and $r: X' \to X \times_S S'$ a log alteration, such that $g: X' \to S'$ is a curve. Then $g_*R_\star(X') \subset R_\star(S')$ and $g^*R^\star(S') \subset R^\star(X')$.
\end{lemma}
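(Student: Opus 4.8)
The plan is to establish the two inclusions separately. The pullback inclusion is formal, while the pushforward inclusion is proved by induction on the dimension of the base.

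\emph{The pullback statement.} Since $g^*$ is a ring homomorphism and $R^\star(S')=\langle p^*R^\star(S),\mathsf{PP}(S')\rangle$ is generated as a ring by $p^*R^\star(S)$ and $\mathsf{PP}(S')$, it suffices to check the two kinds of generators. Writing $q:X'\to X$ for the composite $X'\to X\times_S S'\to X$, the outer square commutes, $p\circ g=f\circ q$, so $g^*p^*=q^*f^*$. Property (3) gives $f^*R^\star(S)\subset R^\star(X)$, and Lemma \ref{lem:pullbackpushforward} gives $q^*R^\star(X)\subset R^\star(X')$, whence $g^*p^*R^\star(S)\subset R^\star(X')$. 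For the other generators, pullbacks of piecewise polynomials are piecewise polynomials, so $g^*\mathsf{PP}(S')\subset\mathsf{PP}(X')\subset R^\star(X')$. This settles the first inclusion.

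\emph{Reduction of the pushforward statement.} I would first reduce to the case $X'=X\times_S S'$. Factor $g=f''\circ r$, where $f'':X\times_S S'\to S'$ is the base change of $f$ and $r:X'\to X\times_S S'$ is the given log alteration. One checks directly that the ring of Definition \ref{def: lift} is insensitive to the intermediate base, since the piecewise polynomials pulled back from $X\times_S S'$ are already piecewise polynomials, hence lie in $\SH(X')$; thus $R_\star(X')$ is also the lift along $r$. Lemma \ref{lem:pullbackpushforward} then gives $r_*R_\star(X')=R_\star(X\times_S S')$, and $g_*=f''_*r_*$ reduces the problem to proving $f''_*R_\star(X\times_S S')\subset R_\star(S')$.

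\emph{The induction.} For the reduced statement I would induct on $\dim S'$, the base case $\dim S'=0$ being trivial as then $R_\star(S')=\mathsf{CH}(S')=\mathbb{Q}$. A generator of $R_\star(X\times_S S')$ has the form $\beta=\pi^*\delta\cdot F$ with $\delta\in R^\star(X)$, $F\in\SH(X\times_S S')$, and $\pi$ the projection to $X$. The excision description of strata homology together with Lemma \ref{lem: ppsurjective} writes $F=c\,[X\times_S S']+\sum_{Y''}(i_{Y''})_*F_{Y''}$, where $c$ is a scalar (the generic stratum of an Artin fan is a point, contributing only the fundamental class), the sum runs over \emph{proper} normalized strata $Y''$, and $F_{Y''}\in\SH(Y'')$. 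For the scalar term, the projection formula and flat--proper base change along the Cartesian square give $f''_*(c\,\pi^*\delta)=c\,p^*f_*\delta$; property (2) puts $f_*\delta\in R^\star(S)$, and Lemma \ref{lem:pullbackpushforward} puts $p^*f_*\delta\in R^\star(S')\subset R_\star(S')$. For a proper term, the projection formula gives $f''_*(i_{Y''})_*\!\big((\pi i_{Y''})^*\delta\cdot F_{Y''}\big)=(i_{T''})_*(f''|_{Y''})_*\!\big((\pi i_{Y''})^*\delta\cdot F_{Y''}\big)$, where $T''$ is the image stratum in $S'$. Here $(\pi i_{Y''})^*\delta\in R^\star(Y'')$ by compatibility with strata inclusions (Remark \ref{remark: compatibility}) and Lemma \ref{lem:pullbackpushforward}, so the class being pushed lies in $R_\star(Y'')$; since $f''|_{Y''}:Y''\to T''$ is again a curve (or an isomorphism) over the strictly smaller base $T''$, the inductive hypothesis gives $(f''|_{Y''})_*R_\star(Y'')\subset R_\star(T'')$, and the compatibility $(i_{T''})_*R_\star(T'')\subset R_\star(S')$ recorded just before the lemma finishes the term. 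Summing yields $f''_*\beta\in R_\star(S')$.

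\emph{Main obstacle.} The delicate point is legitimizing the inductive step: one must know that each proper stratum $Y''\to T''$ is genuinely an instance of the lemma, i.e. a log alteration of $Y\times_T T''$ for a stratum curve $Y\to T$ of the original family, so that the hypothesis applies at level $\dim T''<\dim S'$ and the strata homology formalism (which is built to tolerate singular toroidal $Y'',T''$) carries through. Subsidiary to this are the two consistency checks flagged above—that Definition \ref{def: lift} does not depend on the intermediate base, and that $(\pi i_{Y''})^*\delta$ lands in the lifted ring $R^\star(Y'')$—together with verifying that the fiber square $X\times_S S'$ has strata compatible with those of $X$ and $S'$, which is what makes the combinatorial bookkeeping of the induction go through.
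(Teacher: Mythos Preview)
Your pullback argument matches the paper's. For the pushforward, the approaches diverge: the paper does \emph{not} induct on $\dim S'$. Instead it uses semistable reduction to replace $X'\to S'$ by a family with $X',S'$ smooth (so $\SH=\mathsf{PP}$), decomposes a piecewise polynomial as $\sum (i_{Y'})_*\beta_{Y'}$, and then performs a direct three-case analysis on each stratum diagram $Y'\to T'$ over $Y\to T$: (i) curve over curve, (ii) node over node, (iii) curve (a $\mathbb{P}^1$-bundle) over node. Cases (i)--(ii) are Cartesian and dispatched by the tensor formula $\mathsf{PP}(Y')=\mathsf{PP}(Y)\otimes_{\mathsf{PP}(T)}\mathsf{PP}(T')$ together with $g_*q^*=p^*f_*$; case (iii) is handled by the explicit Chow ring of a $\mathbb{P}^1$-bundle pulled back from the Artin fan. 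Your reduction to $X'=X\times_S S'$ is clever---it makes case (iii) disappear entirely, since no new rational components are introduced---and in that setup every relevant stratum diagram is genuinely Cartesian.

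The gap, which you correctly flag as your ``main obstacle,'' is that the inductive step is not an instance of the lemma as stated. The restricted map $T''\to T$ between normalized strata is typically \emph{not} a log alteration: it need not even be birational (think of the exceptional $\mathbb{P}^1$ in the blowup of $\mathbb{A}^2$ at the origin mapping to the point stratum). So you cannot invoke the lemma at level $\dim T''$. The paper notes in Section~3.1 that restrictions of log alterations to normalized strata are only proper and surjective, and then re-proves the relevant $\SH$-compatibilities in that generality; but to run your induction you would need to upgrade the entire statement to cover such restricted maps from the outset. That is doable, but it means proving a stronger claim for all normalized strata simultaneously, which you have not set up. Without that strengthening the recursion is circular. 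The paper's case analysis sidesteps the issue altogether by never appealing to a lower-dimensional instance of the result.
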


\begin{proof}
The $g^*$ part is straightforward. Note that if 
\[
\begin{tikzcd}
X'' \ar[r,"\tau"] \ar[d,"h"] & X' \ar[d,"g"] \\ 
S'' \ar[r,"\sigma"] & S'
\end{tikzcd}
\]
is a diagram with $\sigma$ and $\tau$ log alterations, and the result holds for $h$, it also does for $g$, as $\tau_*R_\star(X'') \subset R_\star(X')$. Thus, by the semistable reduction theorem, we can assume $X'$ and $S'$ are smooth. 

Write $q$ for the composition $X' \to X$. We need to show that 
$$
g_*(\beta \cdot q^*\gamma) \in R_\star(S')
$$
where $\beta \in \mathsf{PP}(X')$ and $\gamma \in R(X)$. We can write $\beta$ as a sum of terms 
$$
\sum (i_{Y'})_* \beta_{Y'}
$$
where $Y'$ is a normalized stratum of $X'$, and $\beta_{Y'}$ is a piecewise polynomial on $Y'$. The map $Y'$ to $X$ will factor through some normalized strata $Y,T',T$ of $X,S',S$ respectively. Furthermore, since restrictions of tautological classes to $Y$ are tautological, we are reduced to proving the statement for the diagram of normalized strata

\[
\begin{tikzcd}
 Y' \ar[d,"g"] \ar[r,"q"] & Y\ar[d,"f"] \\ T' \ar[r,"p"] & T.
\end{tikzcd}
\]
The diagram falls into one of the three cases: 
\begin{itemize}
\item $Y'$ is a relative curve over $T'$, mapping to a relative curve $Y$ over $T$. 
\item $Y'$ is a relative node over $T'$, mapping to a relative node $Y$ over $T$. 
\item $Y'$ is a relative curve over $T'$, mapping to a relative node $Y$ over $T$. 
\end{itemize}

In the first two cases, the diagram is Cartesian, $p$ and $q$ are local complete intersection morphisms of the same dimension, and $f$ and $g$ are flat. Therefore, we have
$$
\mathsf{PP}(Y') = \mathsf{PP}(Y) \otimes_{\mathsf{PP}(T)} \mathsf{PP}(T')
$$
and so $\beta = g^*\delta \cdot q^*\epsilon$, for $\delta \in \mathsf{PP}(T') \subset R^\star(T')$ and $\epsilon \in \mathsf{PP}(Y) \subset R^\star(Y)$. Thus
$$
g_*(\beta \cdot q^*\gamma) = g_*(g^*\delta \cdot q^*(\epsilon \cdot \gamma) = \delta \cdot  p^*f_*(\epsilon \cdot \gamma)
$$
and $f_*(\epsilon \cdot \gamma) \in R(T)$ by the tautological ring properties. Therefore, the claim follows. In the third case, the diagram is no longer Cartesian, and $Y'$ is a $\mathbb{P}^1$ bundle over $T'$. As $X' \to X \times_S S'$, $S' \to S$ are logarithmic modifications, the $\mathbb{P}^1$-bundle is in fact pulled back from a $\mathbb{P}^1$-bundle of the corresponding Artin fans. Thus, its Chern classes are piecewise polynomials, and we have that its Chow group is generated by the hyperplane class $H$, with the relation
$$
\mathsf{CH}(Y') = \mathsf{CH}(T')[H]/H = \ell
$$
for a piecewise linear function $\ell \in \mathsf{PP}(T')$\footnote{In tropical terms, the stratum $Y'$ is obtained by subdividing an edge $e$ of length $\ell_e$ into two pieces, $\ell_1,\ell_2$. The piecewise linear function $\ell$ is $\ell_2-\ell_1$ .}. Thus
$$
\beta = F(H) \cdot g^*\delta
$$
for a polynomial $F$ in $H$ and $\delta \in \mathsf{PP}(T')$. Furthermore, $f$ is an isomorphism, and we find 
$$
q^*(\gamma) = g^*p^*(f_*(\gamma)).
$$
Thus 
$$
g_*(\beta \cdot q^*\gamma) = p^*f_*(\gamma) \cdot \delta \cdot F(\ell) \in R(T').
$$
By the compatibility property, the image of $g_*(\beta \cdot q^*\gamma)$ in $\mathsf{CH}(S')$ is in $R^\star(S')$. 
\end{proof}

The argument in fact shows a little more: for any normalized stratum $Y' \subset X'$ dominating a normalized stratum $T' \subset T$, we have $g_*R^\star(Y') \subset R^\star(T')$.

Therefore, combining with lemmas \ref{lem: tangent}, \ref{lem: generation}, we have shown 

\begin{corollary}
\label{cor: Axioms}
Let $p:S' \to S$, $r:X' \to X \times_S S'$ be log alterations keeping $X' \to S'$ a curve, with $X',S'$ smooth. Then the rings $\{(R^\star(T'),R^\star(Y')\}$ defined in \ref{def: lift} also satisfy the properties \ref{axioms} that the tautological ring does. 
\end{corollary}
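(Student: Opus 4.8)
The plan is to verify the five properties \ref{axioms} one at a time for the primed system $\{R^\star(T'),R^\star(Y')\}$, since the corollary is really a matter of assembling the lemmas already in place. Throughout I exploit the hypothesis that $X'$ and $S'$ are smooth: on the one hand, evaluation against the fundamental class identifies $R^\star$ with $R_\star$, so I may pass freely between operational and homological statements; on the other hand, smoothness of the normalized strata $Y'$ makes $\Sigma_{Y'}$ simplicial, so that $\SH(Y') = \mathsf{PP}(Y')$ and the $\SH$-generators appearing in the lemmas coincide with the $\mathsf{PP}$-generators demanded by the axioms.

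Properties (1), (3), (4), and the containment half of (2) are then almost immediate citations. Property (1), namely $\mathsf{PP}(T') \subset R^\star(T')$ and $\mathsf{PP}(Y') \subset R^\star(Y')$, is built into Definition \ref{def: lift} by construction. Property (4), that $c_1(T_g)$ lies in $R^\star(Y')$ for the relative curve $g:Y'\to T'$, is exactly Lemma \ref{lem: tangent} together with its stated extension to maps of normalized strata. Property (3), $g^*R^\star(T') \subset R^\star(Y')$, and the inclusion $g_*R^\star(Y') \subset R^\star(T')$ (the $\subseteq$ half of property (2)) are the two conclusions of the lemma immediately preceding this corollary, using the remark there that its argument applies to every normalized stratum $Y'$ dominating $T'$.

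The substantive input is property (5), which I would deduce from Lemma \ref{lem: generation}: it yields
$$
R_\star(T') \subset \left\langle g_*\SH(Y'),\, g_*(i_{Y'})^*R^\star(X') \right\rangle,
$$
and via $\SH(Y') = \mathsf{PP}(Y')$ and the isomorphism $R^\star \cong R_\star$ this is precisely the statement that $R^\star(T')$ is generated by the pushforwards $g_*(\mathsf{PP}(Y'))$ and $g_*(i_{Y'})^*R^\star(X')$. From property (5) I can then recover the surjectivity half of property (2): each listed generator visibly lies in $g_*R^\star(Y')$ — using property (1) for the $\mathsf{PP}(Y')$ terms and the inclusion $i_{Y'}^*R^\star(X') \subset R^\star(Y')$ for the others — and a product of two such pushforwards is again such a pushforward by the projection formula $g_*(\alpha)\cdot g_*(\beta) = g_*(\alpha \cdot g^*g_*\beta)$, where $g_*\beta \in R^\star(T')$ by the already-established inclusion and hence $g^*g_*\beta \in R^\star(Y')$ by property (3). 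This gives $R^\star(T') \subset g_*R^\star(Y')$ and completes property (2).

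The main obstacle, and the only spot requiring genuine care, is the mismatch in hypotheses for property (5): Lemma \ref{lem: generation} is phrased homologically and for the strict fiber product $X' = X\times_S S'$, whereas the corollary permits a further log alteration $r:X' \to X\times_S S'$. To bridge this I would factor $g$ through the fiber product and push the generation statement forward along $r$, invoking Lemma \ref{lem:pullbackpushforward} (so that $r_*R_\star(Y')$ equals $R_\star$ of the corresponding fiber-product stratum) to confirm that no new generators appear; the passage from $R_\star$ back to $R^\star$ is then harmless under the smoothness of $X'$ and $S'$. Everything that remains is bookkeeping across the three shapes of the induced maps of normalized strata — curve over curve, node over node, and curve over node — all of which have already been analyzed in the proof of the preceding lemma.
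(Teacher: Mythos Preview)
Your proposal is correct and follows the same approach as the paper, which proves the corollary simply by combining the preceding lemmas (explicitly, Lemmas \ref{lem: tangent} and \ref{lem: generation} together with the unnamed $g_*,g^*$ lemma immediately before the corollary). Your explicit flagging of the hypothesis mismatch in Lemma \ref{lem: generation} (stated only for the strict fiber product) and the proposed bridge via Lemma \ref{lem:pullbackpushforward} is more careful than the paper itself, which absorbs this point into the phrase ``combining with''.
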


In particular, the whole system is determined by $R^\star(X')$ and the various piecewise polynomials on strata.

\begin{definition}
Let $f:X \to S$ be a log curve with $X,S$ smooth. The \emph{logarithmic tautological ring} of $f$ is\footnote{Our terminology here clashes with the terminology we adopted in \cite{MPS}. The logarithmic tautological ring of loc. cit consists only of the piecewise polynomials, and is contained in the logarithmic tautological ring defined here.} 
\begin{align*}
\log R^*(S) = \varinjlim R^*(S')\\
\log R^*(X) = \varinjlim R^*(X').
\end{align*}
\end{definition}

\section{Grothendieck-Riemann-Roch and Twisting} 
\begin{lemma}
Let $q:X' \to X$ be a log alteration, and $\alpha$ a piecewise linear function on $X'$. Then the Chern classes of 
$$
Rq_*\mathcal{O}(-\alpha)
$$ 
are piecewise polynomials on $X$.
\end{lemma}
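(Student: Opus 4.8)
The plan is to compute $\operatorname{ch}(Rq_*\mathcal{O}(-\alpha))$ via Grothendieck--Riemann--Roch and to observe that every factor appearing in the GRR integrand is a piecewise polynomial on $X'$; since the pushforward of a piecewise polynomial along a log alteration is again a piecewise polynomial (the corollary stating $q_*\mathsf{PP}(X') = \mathsf{PP}(X)$), this forces $\operatorname{ch}(Rq_*\mathcal{O}(-\alpha))$, and hence all the Chern classes, into $\mathsf{PP}(X)$.

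First I would reduce to the case where $X'$ is smooth. Recall $X$ is smooth by our standing hypothesis on the log curve $f\colon X \to S$. If $X'$ is singular, dominate it by a desingularization $\rho\colon X'' \to X'$ by a log alteration with $X''$ smooth, via semistable reduction. Pulling back $\alpha$ to the piecewise linear function $\rho^*\alpha$ on $X''$ we have $\mathcal{O}(-\rho^*\alpha) = \rho^*\mathcal{O}(-\alpha)$, and since toroidal modifications (and roots, rationally) satisfy $R\rho_*\mathcal{O}_{X''} = \mathcal{O}_{X'}$, the projection formula gives $R(q\rho)_*\mathcal{O}(-\rho^*\alpha) = Rq_*\mathcal{O}(-\alpha)$. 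As $q\rho$ is again a log alteration, we may therefore replace $X'$ by $X''$ and assume $X'$ smooth.

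With $X,X'$ both smooth, the relative tangent complex $T_{X'/X}$ is perfect and GRR reads
$$
\operatorname{ch}(Rq_*\mathcal{O}(-\alpha)) = q_*\bigl(\operatorname{ch}(\mathcal{O}(-\alpha)) \cdot \operatorname{td}(T_{X'/X})\bigr).
$$
Now I would check both factors of the integrand lie in $\mathsf{PP}(X')$. For the first, $\operatorname{ch}(\mathcal{O}(-\alpha)) = \exp\bigl(c_1(\mathcal{O}(-\alpha))\bigr)$, and $c_1(\mathcal{O}(-\alpha))$ is exactly the image of the piecewise linear function $\alpha$ under the twistor homomorphism $\mathsf{PL}(\Sigma_{X'}) \to \mathsf{Pic}(X')$, hence a (degree one) piecewise polynomial; since $\mathsf{PP}(X')$ is a graded ring, its exponential is piecewise polynomial in each degree. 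For the second, the Cartesian square $X' = X\times_{\mathcal{A}_X}\mathcal{A}_{X'}$ with smooth Artin-fan maps shows, exactly as in the proof of Lemma \ref{lem: tangent}, that $T_{X'/X}$ is pulled back from $\mathcal{A}_{X'}$, so its Chern classes, and therefore $\operatorname{td}(T_{X'/X})$, are piecewise polynomials on $X'$. The product is then piecewise polynomial, and applying $q_*\mathsf{PP}(X') = \mathsf{PP}(X)$ yields $\operatorname{ch}(Rq_*\mathcal{O}(-\alpha)) \in \mathsf{PP}(X)$; since the Chern classes are universal polynomials in the components of $\operatorname{ch}$ and $\mathsf{PP}(X)$ is a ring, the Chern classes are piecewise polynomial as claimed.

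The main obstacle I expect is the reduction to smooth $X'$ together with the clean application of GRR in the stacky and logarithmic setting: one must ensure that passing to a smooth desingularization does not alter $Rq_*\mathcal{O}(-\alpha)$ (the vanishing $R\rho_*\mathcal{O}_{X''}=\mathcal{O}_{X'}$ and the behaviour of roots rationally) and that the relative Todd class is genuinely controlled by the Artin fan. The individual piecewise-polynomiality statements for $\operatorname{ch}(\mathcal{O}(-\alpha))$ and $\operatorname{td}(T_{X'/X})$ are each soft once the base-change from $\mathcal{A}_{X'}$ is in hand; the care lies in assembling them through GRR and in invoking the pushforward compatibility of piecewise polynomials.
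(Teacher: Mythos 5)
Your argument is correct, but it takes a genuinely different and heavier route than the paper. The paper's proof is a one-line descent argument: since $\alpha$ is a piecewise linear function, the line bundle $\mathcal{O}(-\alpha)$ on $X'$ is pulled back from the Artin fan $\mathcal{A}_X'$, and since the square formed by $X', X, \mathcal{A}_X', \mathcal{A}_X$ is Cartesian with $\alpha_X\colon X \to \mathcal{A}_X$ smooth, flat base change shows that the entire complex $Rq_*\mathcal{O}(-\alpha)$ is pulled back from $\mathcal{A}_X$; its Chern classes therefore lie in the image of $\mathsf{CH}(\mathcal{A}_X)$, which is $\mathsf{PP}(X)$ by definition. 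This avoids Grothendieck--Riemann--Roch, avoids the desingularization step, and --- importantly --- is insensitive to singularities of either $X$ or $X'$. Your GRR route works, and your two key inputs (that $c_1(\mathcal{O}(-\alpha))$ is a degree-one piecewise polynomial, and that $T_{X'/X}$ is pulled back from $\mathcal{A}_{X}'$ so its Todd class is piecewise polynomial) are exactly the ingredients the paper uses elsewhere (in the Todd class lemma and in the proof of Theorem \ref{thm: chernclassestaut}), but it costs you the reduction to smooth $X'$ via $R\rho_*\mathcal{O}_{X''}=\mathcal{O}_{X'}$ and, more seriously, it leans on smoothness of the \emph{target} $X$: GRR with a relative Todd class $\mathrm{td}(T_{X'/X})$ is not available when $X$ is singular and $q$ is not lci. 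Since the lemma is later applied with $X$ replaced by the possibly singular fiber product $X\times_S S'$, the paper's base-change proof covers a case that your argument, as written, does not; if you want to keep the GRR approach you would need to either restrict the lemma's scope or pass to operational/homological Riemann--Roch. Otherwise the proposal is sound.
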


\begin{proof}
From
\[
\begin{tikzcd}
X' \ar[r] \ar[d] & \mathcal{A}_X' \ar[d] \\ 
X \ar[r,"\alpha_X"] & \mathcal{A}_X
\end{tikzcd}
\]
we deduce that $Rq_*\mathcal{O}(-\alpha)$ is pulled back from the Artin fan, and thus so are its Chern classes.
\end{proof}

\begin{lemma}
Let $f:X \to S$ be a log curve with $X$ and $S$ smooth. Let $p: S' \to S$ and $r: X' \to X \times_S S'$ be log alterations, with $X',S'$ smooth, so that the composition $g:X' \to S'$ remains a curve. Then the Todd class $\mathsf{Td}(g)$ is in $R^*(X')$.
\end{lemma}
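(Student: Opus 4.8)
The plan is to reduce the statement directly to the previous lemma. By definition, $\mathsf{Td}(g)$ is the Todd class $\mathsf{Td}(T_g)$ of the relative tangent complex of $g\colon X'\to S'$. Since $X'$ and $S'$ are smooth and $g$ is a flat family of nodal, hence lci, curves, the relative cotangent complex $L_g$ is perfect of amplitude $[-1,0]$, so $T_g := L_g^\vee$ is a well-defined class in $K$-theory and its Todd class is well-defined. First I would recall that $\mathsf{Td}$ is, in each codimension, a universal polynomial with rational coefficients in the Chern classes $c_i(T_g)$: writing $[T_g]=[E_0]-[E_1]$ in $K$-theory and using multiplicativity $\mathsf{Td}([T_g])=\mathsf{Td}(E_0)\mathsf{Td}(E_1)^{-1}$, each graded piece $\mathsf{Td}_k$ is a $\mathbf Q$-polynomial in $c_1(T_g),\dots,c_k(T_g)$ (for instance $\mathsf{Td}_1=\tfrac12 c_1$, $\mathsf{Td}_2=\tfrac1{12}(c_1^2+c_2)$, and so on), which moreover truncates to a genuine polynomial by dimension reasons.

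Next I would invoke Lemma \ref{lem: tangent}, whose hypotheses are exactly those of the present statement, to conclude $c_i(T_g)\in R^*(X')$ for every $i$. Since $R^*(X')$ is a graded $\mathbf Q$-subalgebra of $\mathsf{CH}(X')$, in particular closed under products and rational linear combinations, any polynomial with rational coefficients in the classes $c_i(T_g)$ again lies in $R^*(X')$. Applying this to the (finite) Todd expression produces $\mathsf{Td}(g)=\mathsf{Td}(T_g)\in R^*(X')$, which is the claim.

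The only point requiring genuine care, and the one I would isolate as the main obstacle, is the bookkeeping of what $T_g$ denotes for the nodal family $g$ and the confirmation that its Chern classes are precisely those controlled by Lemma \ref{lem: tangent}. Because $g$ need not be smooth along its relative nodes, $T_g$ must be read as the virtual relative tangent bundle rather than as an honest line bundle; however, the exact sequences used to prove Lemma \ref{lem: tangent} are sequences of relative tangent complexes that are pulled back from, or differ by piecewise polynomials from classes pulled back from, the Artin fans, so the conclusion $c(T_g)\in R^*(X')$ is valid at the level of the virtual tangent bundle. Once this identification is fixed, the rest is the formal observation that a $\mathbf Q$-subalgebra containing the Chern classes of a $K$-theory class automatically contains its Todd class.
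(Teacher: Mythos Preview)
Your proof is correct and follows essentially the same idea as the paper: both hinge on Lemma~\ref{lem: tangent} to place the relevant Chern classes in $R^*(X')$, after which the Todd class, being a rational polynomial in those Chern classes, lands in $R^*(X')$ automatically. The paper packages this slightly differently, using multiplicativity of Todd along $p\circ g = f\circ q$ to write $\mathsf{Td}(g)=q^*\mathsf{Td}(f)\cdot\mathsf{Td}(q)\cdot g^*\mathsf{Td}(p)^{-1}$ and then appealing to the axioms for $\mathsf{Td}(f)$ and to (the proof of) Lemma~\ref{lem: tangent} for the piecewise-polynomial factors $\mathsf{Td}(p),\mathsf{Td}(q)$; your direct application of Lemma~\ref{lem: tangent} to $T_g$ itself is a legitimate shortcut to the same conclusion.
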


\begin{proof}
The situation fits into the diagram 
\[
\begin{tikzcd}
X' \ar[d,"r"] \ar[rd,"q"]& \\ X \times_S S' \ar[d,"f'"] \ar[r,"p'"] & X \ar[d,"f"] \\ S' \ar[r,"p"] & S.
\end{tikzcd}
\]
$$
\mathsf{Td}(p \circ g) = \mathsf{Td}(f \circ q) 
$$
and so 
$$
\mathsf{Td}(g) = r^*\mathsf{Td}(f) \mathsf{Td}(q) g^*\mathsf{Td}(p)^{-1}.
$$
Since the Chern classes of $T_f$ are axiomatically in $R^*(X)$, the Todd class $\mathsf{Td}(f)$ is in $R_*(X)$. Similarly, since $\mathsf{Td}(p),\mathsf{Td}(q)$ are polynomial in the Chern classes of the relative tangent bundles $T_p,T_q$, the result follows from lemma \ref{lem: tangent}.  
\end{proof}

\begin{remark}
If $X', S'$ are not smooth, it is still possible to talk about a homological Todd class in $\mathsf{CH}(X')$ \cite[Section 18]{FultonInt}. This can be obtained as pushforward of the usual Todd class from any desingularization. Thus, by performing semistable reduction to $X' \to S'$, and pushing forward the resulting Todd class of the semistable family $X'' \to S''$, we find that it is still true that $\mathsf{Td}(g) \in R_\star(X')$. Since the map $g$ is lci, $\mathsf{Td}(g)$ is in fact the Todd class of $T_g$, and so in $R^\star(X')$. 
\end{remark}

\begin{theorem}
\label{thm: chernclassestaut}
Let $X \to S$ be as above, $\mathcal{L}$ a line bundle with $c_1$ in $R^*(X)$. Let 
\[
\begin{tikzcd}
X' \ar[d,"r"] \ar[rd,"q"]& \\ X \times_S S' \ar[d,"f'"] \ar[r,"p'"] & X \ar[d,"f"] \\ S' \ar[r,"p"] & S
\end{tikzcd}
\]
be a diagram with $p,r$ log alterations, with $g=f' \circ r$ a relative curve. Consider the line bundle $\mathcal{L}(-\alpha) := q^*\mathcal{L} \otimes \mathcal{O}(-\alpha)$, for $\alpha$ a piecewise linear function on $X'$. Then all Chern classes of 
$$
Rg_*\mathcal{L}(-\alpha)
$$ 
are in $R^*(S')$. 
\end{theorem}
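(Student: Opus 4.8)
The plan is to compute the Chern character of $Rg_*\mathcal{L}(-\alpha)$ by Grothendieck--Riemann--Roch and then to verify that each factor that appears is tautological. Since $g\colon X' \to S'$ is a proper family of curves, hence proper and lci, GRR applies and yields
\[
\mathrm{ch}(Rg_*\mathcal{L}(-\alpha)) = g_*\big(\mathrm{ch}(\mathcal{L}(-\alpha)) \cdot \mathsf{Td}(g)\big),
\]
where $\mathsf{Td}(g)$ denotes the Todd class of the relative tangent complex of $g$. Because over $\mathbf{Q}$ the Chern character and the total Chern class determine one another through universal polynomials, it suffices to prove that the right-hand side lies in $R^*(S')$.

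First I would show the integrand $\mathrm{ch}(\mathcal{L}(-\alpha)) \cdot \mathsf{Td}(g)$ lies in $R^*(X')$. Writing $c_1(\mathcal{L}(-\alpha)) = q^*c_1(\mathcal{L}) + c_1(\mathcal{O}(-\alpha))$, the first summand is in $R^*(X')$ because $c_1(\mathcal{L}) \in R^*(X)$ by hypothesis and $q^*R^*(X) \subset R^*(X')$ by Definition \ref{def: lift} (equivalently \ref{lem:pullbackpushforward}), while the second is a piecewise linear function and so lies in $\mathsf{PP}(X') \subset R^*(X')$. As $R^*(X')$ is a ring and $\mathcal{L}(-\alpha)$ is a line bundle, $\mathrm{ch}(\mathcal{L}(-\alpha)) = \exp(c_1(\mathcal{L}(-\alpha)))$ is again in $R^*(X')$; combined with $\mathsf{Td}(g) \in R^*(X')$ from the Todd-class lemma above, the product is tautological.

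It then remains to push forward, for which I would invoke the pushforward lemma stated just above, giving $g_*R^*(X') \subset R^*(S')$ (here one uses the identification $R^* = R_\star$ valid when $X',S'$ are smooth; the general case is obtained, exactly as in the remark following the Todd-class lemma, by first applying semistable reduction and working with the homological Todd class and $R_\star$). This immediately gives $\mathrm{ch}(Rg_*\mathcal{L}(-\alpha)) \in R^*(S')$ and hence that every Chern class of $Rg_*\mathcal{L}(-\alpha)$ is in $R^*(S')$. Given the machinery already assembled, the theorem is essentially a formal consequence of GRR; the only genuinely non-trivial inputs are the two preceding lemmas---that $\mathsf{Td}(g)$ is tautological and that $g_*$ preserves the tautological ring (the latter relying on the delicate strata-by-strata analysis, in particular the $\mathbb{P}^1$-bundle case). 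The main obstacle I anticipate is therefore not the GRR bookkeeping itself but ensuring the $R^*$-versus-$R_\star$ reduction to the smooth case is airtight, so that ``Chern classes in $R^*(S')$'' is meaningful and correct even when $S'$ is singular before semistable reduction.
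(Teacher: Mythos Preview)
Your argument is correct, but it is organized differently from the paper's. The paper does not apply GRR to $g$ directly. Instead it factors $g = f' \circ r$, uses the projection formula for $r$ to write
\[
Rg_*\mathcal{L}(-\alpha) \;=\; Rf'_*\bigl(p'^*\mathcal{L}\otimes Rr_*\mathcal{O}(-\alpha)\bigr),
\]
and then applies GRR to $f'$ on the intermediate space $X\times_S S'$. This brings in Lemma~5.1 (the Chern character of $Rr_*\mathcal{O}(-\alpha)$ is a piecewise polynomial on $X\times_S S'$) and the trivial identity $\mathsf{Td}(f') = p'^*\mathsf{Td}(f)$, so the Todd-class lemma (Lemma~5.2) is actually \emph{not} invoked in the paper's proof. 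Both routes then finish with the same pushforward lemma $g_*R_\star\subset R_\star$ (the paper uses it with $r=\mathrm{id}$, you use it for $g$ itself).

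Your route is the more streamlined one once Lemma~5.2 and its remark are in hand: everything happens on $X'$, and the piecewise-polynomial lemma for $Rr_*\mathcal{O}(-\alpha)$ becomes unnecessary. The paper's route trades the heavier Todd input for the lighter Lemma~5.1, at the cost of working on the possibly singular $X\times_S S'$ and carrying the $R^\star$/$R_\star$ bookkeeping there. Your handling of the singular case via semistable reduction and the homological Todd class mirrors the paper's Remark after Lemma~5.2 and is adequate; just note that the final passage from $c_i\cap[S']\in R_\star(S')$ to $c_i\in R^\star(S')$ (which the paper also makes) uses that these Chern classes are operational, so there is no extra difficulty.
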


\begin{proof}

By the projection formula, we have 

$$
Rg_* \mathcal{L}(-\alpha) = Rf'_*(p'^*\mathcal{L} \otimes Rr_*\mathcal{O}(-\alpha))
$$
and so by Grothendieck Riemann Roch we find 
$$
\mathrm{ch}(Rg_*\mathcal{L}(-\alpha)) = f'_*(p'^*\mathrm{ch}(\mathcal{L})\mathrm{ch}(Rr_*\mathcal{O}(-\alpha))p'^*\mathsf{Td}(f)) .
$$
Thus, the Chern character of $Rg_*\mathcal{L}(-\alpha)$ has the form $f'_*(p'^*(\alpha)\beta)$, where $\alpha \in R^*(X)$ and $\beta \in R_*(X \times_S S')$. Thus, it is in $R_*(X \times_S S')$, and thus its pushforward is in $R_*(S')$. We thus find that 
$$
c_i(Rg_*\mathcal{L}(-\alpha)) \cap [S'] \in R_*(S')
$$
and therefore $c_i(Rg_*\mathcal{L}(-\alpha)) \in R^*(S')$. 
\end{proof}

\section{Pullbacks of Universal Brill-Noether Classes}
We now fix a number of markings $n \ge 1$, a generic stability condition $\theta$, and data $A$ consisting of an integer $k$, a vector of integers $(a_1,\cdots,a_n)$, and a boundary divisor $D$ on the universal curve $\overline{\mathcal{C}}_{g,n}$. We look at the rational map
\begin{align*}
\mathsf{aj}_A: \Mgn \to \mathsf{Pic}_{g,n}^\theta\\
(C,x_1,\cdots,x_n) \mapsto \omega^k(\sum a_ix_i) \otimes \mathcal{O}(D|_C)
\end{align*}
and the classes 
$$
\mathsf{w}_{g,A,d}^r(\theta):= \mathsf{aj}_A^*(\mathsf{w}_{g,d}^r(\theta)) \in \mathsf{CH}^{g-\rho}(\Mgn)
$$
of the introduction. 

We apply the discussion of the previous section to $\overline{\mathcal{C}}_{g,n} \to \overline{\mathcal{M}}_{g,n}$ with $R$ the tautological rings. For any stability condition $\theta$, we can find a log modification 
$$
p^\theta: \overline{\mathcal{M}}_{g,A}^\theta \to \overline{\mathcal{M}}_{g,n}
$$
for which $\mathsf{aj}_A$ extends to a morphism 
$$
\mathsf{aj}_A^\theta: \DMgn^\theta \to \mathsf{Pic}_{g,n}^\theta.\\
$$
The construction is given in \cite{HMPPS}[Theorem 23] with $D=0$, but can be easily modified to any $D$. A similar construction is given in \cite{AP}, also for $D=0$ and a special choice of stability condition. We define: 

\begin{definition}
\[
\log \mathsf{w}_{g,d,A}^r(\theta) = (\mathsf{aj}_A^\theta)^*(\mathsf{w}_{g,d}^r(\theta)) \in \mathsf{CH}(\DMgn^\theta) \subset \log \mathsf{CH}(\Mgn,\partial \Mgn).
\]
\end{definition}
Since we have 
$$
\mathsf{aj}_A^* = p^\theta_* \circ (\mathsf{aj}_A^\theta)^*
$$
to show that $\mathsf{w}_{g,d,A}^r(\theta)$ are tautological, it suffices to show that
$$
(\mathsf{aj}_A^\theta)^*(\mathsf{w}_{g,d}^r(\theta))
$$ 
lie in $R(\DMgn^\theta)$.  Now, the functor of points of $\DMgn^\theta$, while hard to describe on the category of schemes, is straightforward on the category of log schemes: a log map $S \to \DMgn^\theta$ is the data of a quasi-stable curve $C \to S$, and a piecewise linear function $\alpha$ on $C$ such that 
$$
\omega_{C/S}^k(\sum a_ix_i) \otimes \mathcal{O}_D \otimes \mathcal{O}(-\alpha)
$$
is $\theta$-stable. In particular, $\DMgn^\theta$ carries a universal (quasi-stable) curve, which we denote by $\mathcal{C}^\theta$, and a piecewise linear function $\alpha$ on $\mathcal{C}^\theta$. Put together, we have a Cartesian diagram 

\[
\begin{tikzcd}
\mathcal{C}^\theta \ar[d,"\rho"] \ar[r,"s"] & \mathcal{C}_{g,n}^\theta \ar[d,"\pi"] \\ \DMgn^\theta \ar[r,"\mathsf{aj}_A^\theta"] & \mathsf{Pic}_{g,n}^\theta
\end{tikzcd}
\]

and we have 
$$
\mathcal{L}^\theta: = s^*\mathcal{L} = \omega^k(\sum a_ix_i) \otimes \mathcal{O}(D) \otimes \mathcal{O}(-\alpha).
$$

We now note

\begin{lemma}\cite{IllFGA}
\label{lem: pullbackcomplex}
Let 
\[
\begin{tikzcd}
X' \ar[r,"q"] \ar[d,"g"] & X \ar[d,"f"] \\ S' \ar[r,"p"] & S 
\end{tikzcd}
\]
be a Cartesian diagram with $f$ proper and flat. Suppose $\mathcal{L}$ is a line bundle on $X$ (or, more generally, any perfect complex). Then 
$$
Rg_*q^*\mathcal{L} = p^*(Rf)_*\mathcal{L}.
$$
\end{lemma}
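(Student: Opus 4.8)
The plan is to recognize this statement as the derived form of cohomology and base change and to reduce it to the standard theorem of Illusie, which is exactly what the citation points to. For any Cartesian square there is a canonical base change natural transformation $Lp^* Rf_* \to Rg_* Lq^*$, built from the unit and counit of the adjunctions $(q^*, Rq_*)$ and $(Lp^*, Rp_*)$; the entire content of the lemma is that, under the stated hypotheses, this transformation evaluated on $\mathcal{L}$ is an isomorphism in the derived category. So the first thing I would do is write down this base change morphism and reduce the lemma to the assertion that it is a quasi-isomorphism.

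The second step is to verify the hypotheses that make cohomology and base change applicable. Since $f$ is flat and $\mathcal{L}$ is a line bundle, $\mathcal{L}$ is flat over $S$ (locally it is $\mathcal{O}_X$, which is $\mathcal{O}_S$-flat because $f$ is flat); the general ``perfect complex'' case is subsumed by assuming $\mathcal{L}$ is $f$-perfect of finite Tor-amplitude over $S$. With $f$ proper and $\mathcal{L}$ coherent and $S$-flat, Grothendieck's finiteness together with the cohomology-and-base-change theorem in its derived formulation \cite{IllFGA} guarantees that $Rf_*\mathcal{L}$ is a perfect complex on $S$ whose formation commutes with arbitrary base change, i.e. that the morphism of the first step is an isomorphism $Lp^* Rf_*\mathcal{L} \xrightarrow{\sim} Rg_* Lq^*\mathcal{L}$. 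For a relative curve, where $Rf_*$ is concentrated in degrees $0$ and $1$, this could even be seen concretely by representing $Rf_*\mathcal{L}$ by a two-term complex of vector bundles whose formation commutes with base change, but invoking the general theorem is cleaner and is what I would do.

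The final step is bookkeeping between the derived and underived functors that appear in the statement. When $\mathcal{L}$ is a line bundle it is locally free over $\mathcal{O}_X$, so its pullback is automatically underived, $Lq^*\mathcal{L} = q^*\mathcal{L}$; for a general perfect complex one simply reads the symbol $q^*$ as $Lq^*$. Likewise $p^*(Rf)_*\mathcal{L}$ is to be read as the derived pullback of the perfect complex $Rf_*\mathcal{L}$, and this convention is harmless for the downstream Grothendieck--Riemann--Roch computations, since $Lp^*$ and $p^*$ induce the same effect on the Chern character of a perfect complex. Substituting these identifications into the isomorphism of the second step yields $Rg_* q^*\mathcal{L} = p^* Rf_*\mathcal{L}$, as claimed.

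I do not expect a genuine obstacle here, since the statement is essentially a citation; the only point requiring vigilance is that the base change must be taken in the derived category even though $p$ is not assumed flat. The flatness that rescues the argument is that of $\mathcal{L}$ over the base $S$ (not that of $p$), and confirming this $S$-flatness is precisely what licenses applying the base change theorem without correction terms and dropping the derived decorations in the line bundle case.
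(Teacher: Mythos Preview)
Your proposal is correct and follows essentially the same route as the paper: both invoke cohomology and base change, using that $\mathcal{L}$ is flat over $S$ because $f$ is flat, and then remove the derived decorations via $Lq^*\mathcal{L}=q^*\mathcal{L}$ (line bundle) and the perfectness of $Rf_*\mathcal{L}$ (properness of $f$). The only cosmetic difference is that the paper phrases the base change isomorphism through the derived fibre product coinciding with $X'$ (since $f$ is flat) and concludes $Lp^*Rf_*\mathcal{L}=p^*Rf_*\mathcal{L}$ directly from perfectness, whereas you hedge by saying the derived/underived pullback agree on Chern characters; the paper's formulation is marginally stronger but the content is the same.
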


\begin{proof}
This follows by cohomology and base change because $\mathcal{L}$ is flat over $S$. Namely, we have 
$$
Rg_*Lq^*\mathcal{L} = Lp^*Rf_*\mathcal{L}
$$
in general when $X'$ is replaced by the derived fiber product; in our case, the derived fiber product is $X'$ itself. Furthermore, since $\mathcal{L}$ is a line bundle, 
$$
Lq^*\mathcal{L} = q^*\mathcal{L}
$$
and $Lp^*Rf_*\mathcal{L} = p^*Rf_*\mathcal{L}$ since properness of $f$ implies that $Rf_*\mathcal{L}$ is a perfect complex. 
\end{proof}

It follows in particular that the degeneracy loci of the universal bundle on $\mathcal{C}_{g,n}^\theta$ pull back to the degeneracy loci of $\omega^k(\sum a_ix_i) \otimes \mathcal{O}(D) \otimes \mathcal{O}(-\alpha)$, 
$$
(\mathsf{aj}_A^\theta)^*\Delta_{g-d+r}^{(r+1)}(-R\pi_*\mathcal{L}) = \Delta_{g-d+r}^{(r+1)}(-R\rho_* \omega^k(\sum a_ix_i) \otimes \mathcal{O}(D) \otimes \mathcal{O}(-\alpha)).
$$
Since the Chern classes of $\omega^k(\sum a_ix_i) \otimes \mathcal{O}(D)$ are tautological in $\overline{\mathcal{C}}_{g,n}$, by theorem \ref{thm: chernclassestaut}, it follows that all Chern classes of $R\rho_* \omega^k(\sum a_ix_i) \otimes \mathcal{O}(D) \otimes \mathcal{O}(-\alpha)$ are in $R^\star(\DMgn^\theta)$. Therefore, in particular, we obtain

\begin{theorem}
The classes $\log \mathsf{w}_{g,A,d}^r(\theta)$ lie in $\log R^*(\Mgn,\partial \Mgn)$. 
\end{theorem}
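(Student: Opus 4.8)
The plan is to reduce everything to the computation assembled in the preceding paragraphs and then invoke the Thom--Porteous formula. Since $p^\theta\colon \DMgn^\theta \to \Mgn$ is a logarithmic modification, the inclusion $\mathsf{CH}(\DMgn^\theta) \into \log\mathsf{CH}(\Mgn,\partial\Mgn)$ carries $R^\star(\DMgn^\theta)$ into the colimit $\log R^\star(\Mgn,\partial\Mgn) = \varinjlim R^\star(S')$. Hence it is enough to show that the representative $\log\mathsf{w}_{g,A,d}^r(\theta) = (\mathsf{aj}_A^\theta)^*(\mathsf{w}_{g,d}^r(\theta))$ already lies in $R^\star(\DMgn^\theta)$.

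First I would use the base-change identity of Lemma \ref{lem: pullbackcomplex} to rewrite the pullback of the universal degeneracy locus as the degeneracy locus of the explicit pushforward,
$$(\mathsf{aj}_A^\theta)^*\Delta_{g-d+r}^{(r+1)}(-R\pi_*\mathcal{L}) = \Delta_{g-d+r}^{(r+1)}\bigl(-R\rho_*\mathcal{L}^\theta\bigr),$$
with $\mathcal{L}^\theta = \omega^k(\sum a_i x_i)\otimes\mathcal{O}(D)\otimes\mathcal{O}(-\alpha)$. The structural point is that the class $\Delta^{(r+1)}_{g-d+r}$ is, by its definition in \cite{PRvZ}, a Thom--Porteous determinant: a fixed universal (Schur) polynomial in the Chern classes of the two-term complex $-R\rho_*\mathcal{L}^\theta$. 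Then I would invoke Theorem \ref{thm: chernclassestaut}: because $c_1\bigl(\omega^k(\sum a_i x_i)\otimes\mathcal{O}(D)\bigr)$ is tautological on $\overline{\mathcal{C}}_{g,n}$, every Chern class of $R\rho_*\mathcal{L}^\theta$ lies in $R^\star(\DMgn^\theta)$. As $R^\star(\DMgn^\theta)$ is a ring, the determinantal polynomial in those Chern classes lies there too, giving $\log\mathsf{w}_{g,A,d}^r(\theta) \in R^\star(\DMgn^\theta) \subset \log R^\star(\Mgn,\partial\Mgn)$.

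The hard part is not this final assembly, which is formal once the ingredients are in place, but rather sits upstream in Theorem \ref{thm: chernclassestaut} and the Grothendieck--Riemann--Roch computation behind it: one must verify that each factor appearing there --- the Todd class $\mathsf{Td}(g)$, the Chern character $\mathrm{ch}(p'^*\mathcal{L})$, and the Chern character $\mathrm{ch}(Rr_*\mathcal{O}(-\alpha))$ of the log-modification twist --- is either a piecewise polynomial or a pullback of a tautological class, so that the $f'_*$-pushforward lands in $R_\star(X\times_S S')$ and its further pushforward in $R_\star(S')$. The only remaining care-point at the level of this theorem is to confirm that $\Delta^{(r+1)}_{g-d+r}$ depends only on the Chern classes of the K-theory class $-R\rho_*\mathcal{L}^\theta$, and not on the choice of auxiliary two-term resolution used to present it; but this independence is exactly what the Thom--Porteous formula guarantees, so no genuine difficulty arises.
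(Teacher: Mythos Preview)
Your argument is correct and follows essentially the same route as the paper: use base change (Lemma~\ref{lem: pullbackcomplex}) to identify $(\mathsf{aj}_A^\theta)^*\mathsf{w}_{g,d}^r(\theta)$ with the Thom--Porteous class of $-R\rho_*\mathcal{L}^\theta$, then apply Theorem~\ref{thm: chernclassestaut} to conclude that all its Chern classes, and hence the determinantal polynomial in them, lie in $R^\star(\DMgn^\theta)$. Your added remarks on why $\Delta^{(r+1)}_{g-d+r}$ depends only on the Chern classes are a helpful clarification but do not deviate from the paper's line.
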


We thus obtain as a corollary 

\begin{theorem}
\label{thm: mainthm}
The classes $
\mathsf{w}_{g,A,d}^r(\theta)$ lie in the tautological ring. 
\end{theorem}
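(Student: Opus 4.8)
The plan is to descend the logarithmic statement to $\Mgn$ by pushing forward along the log modification $p^\theta \colon \DMgn^\theta \to \Mgn$. The starting point is the identity
\[
\mathsf{w}_{g,A,d}^r(\theta) \;=\; \mathsf{aj}_A^*(\mathsf{w}_{g,d}^r(\theta)) \;=\; p^\theta_*\big((\mathsf{aj}_A^\theta)^*(\mathsf{w}_{g,d}^r(\theta))\big) \;=\; p^\theta_*\big(\log \mathsf{w}_{g,A,d}^r(\theta)\big),
\]
which records the relation $\mathsf{aj}_A^* = p^\theta_* \circ (\mathsf{aj}_A^\theta)^*$. So it suffices to prove that proper pushforward along $p^\theta$ carries the logarithmic class into the ordinary tautological ring $R^\star(\Mgn)$.

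By the preceding theorem the class $\log \mathsf{w}_{g,A,d}^r(\theta)$ has a representative $(\mathsf{aj}_A^\theta)^*(\mathsf{w}_{g,d}^r(\theta))$ lying in $R^\star(\DMgn^\theta)$ on the smooth log modification $\DMgn^\theta$. Since $R^\star$ is the operational-type theory, proper pushforward is not directly available there, so I would first pass to the homological module $R_\star$: as $\DMgn^\theta$ is smooth, capping with the fundamental class identifies $R^\star(\DMgn^\theta)$ with $R_\star(\DMgn^\theta)$, and the representative is carried to a class in $R_\star(\DMgn^\theta)$.

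Next I would invoke Lemma \ref{lem:pullbackpushforward}, in its form for the base $S = \Mgn$ with $S' = \DMgn^\theta$, which yields $p^\theta_* R_\star(\DMgn^\theta) = R_\star(\Mgn)$. Thus $p^\theta_*\big(\log \mathsf{w}_{g,A,d}^r(\theta) \cap [\DMgn^\theta]\big) \in R_\star(\Mgn)$. Because $p^\theta$ is birational and $\Mgn$ is smooth, this homological pushforward equals $\mathsf{w}_{g,A,d}^r(\theta) \cap [\Mgn]$; and since $R_\star(\Mgn)$ is precisely the image of $R^\star(\Mgn)$ under capping with $[\Mgn]$ (smoothness again), this places $\mathsf{w}_{g,A,d}^r(\theta)$ in $R^\star(\Mgn)$, as desired.

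Essentially all the mathematical content of the theorem is carried by the preceding statement, whose proof runs through the Grothendieck--Riemann--Roch and Thom--Porteous computation of Theorem \ref{thm: chernclassestaut}; the only genuine point in this corollary is that log-tautological classes push forward to honest tautological classes. The sole subtlety to watch, and the main obstacle in writing a fully rigorous argument, is the operational-versus-homological distinction: one cannot push forward within $R^\star$, and must route the argument through the homological module $R_\star$, where the established pushforward compatibility of Lemma \ref{lem:pullbackpushforward} applies. Smoothness of both $\DMgn^\theta$ and $\Mgn$ is exactly what lets one pass freely between the two descriptions and conclude.
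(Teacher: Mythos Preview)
Your proposal is correct and follows exactly the route the paper takes: the paper states this theorem as an immediate corollary, having already recorded the identity $\mathsf{aj}_A^* = p^\theta_* \circ (\mathsf{aj}_A^\theta)^*$ and reduced the claim to showing $(\mathsf{aj}_A^\theta)^*(\mathsf{w}_{g,d}^r(\theta)) \in R^\star(\DMgn^\theta)$, which is the preceding theorem. Your careful handling of the $R^\star$ versus $R_\star$ distinction via smoothness of both spaces and Lemma~\ref{lem:pullbackpushforward} simply makes explicit what the paper leaves implicit.
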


\begin{remark}
Let $\mathfrak{Pic}_{g,n}$ denote the universal Picard stack over genus $g$, $n$-marked prestable curves: its objects over $S$ are genus $g$, $n$-marked prestable curves $C \to S$ with a line bundle $L$ on C. Write $\pi: \mathfrak{C}_{g,n} \to \mathfrak{Pic}_{g,n}$ for the universal curve, and $\mathcal{L}$ for the universal line bundle. Then lemma \ref{lem: pullbackcomplex} can be interpreted as saying that the degeneracy locus 
$$
\Delta_{g-d+r}^{(r+1)} (-R\pi_* \mathcal{L})
$$
defines an operational class on $\mathfrak{Pic}_{g,n}$, i.e. an element of
$$
\mathsf{CH}_{\textup{op}}(\mathfrak{Pic}_{g,n}).
$$
For more on this perspective, see \cite{BHPSS}.
\end{remark}

\subsection{Relation with the Double Ramification Cycle} We now specialize to the case $d=0$, $r=0$ with $A$ given by $D=0$, and a $k$ and integers $a_i$ such that $\sum a_i = k(2g-2)$. We furthermore assume that $\theta$ is a small, generic perturbation of the (degenerate) $0$ stability condition. 

\begin{lemma}
The class $\mathsf{w}_{0}^0(\theta) \in \mathsf{CH}(\mathsf{Pic}^\theta_{g,n})$ is the class of the $[0]$ section. 
\end{lemma}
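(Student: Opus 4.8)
The plan is to reduce the virtual class to a single Chern class, identify its support with the zero section, and then pin down the multiplicity by a transversality computation on the smooth locus.

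First, since $r=0$ and $d=0$, the complex $-R\pi_*\mathcal{L}$ has rank $g-1$, and the degeneracy condition $h^0(L)\ge 1$ is the locus where a representing bundle map $\phi\colon \mathcal{E}_0\to\mathcal{E}_1$ drops rank by exactly one. The Thom--Porteous formula for corank one reduces to a $1\times 1$ determinant, so $\mathsf{w}_0^0(\theta)=c_g(-R\pi_*\mathcal{L})$, the degree-$g$ part of the total Chern class; the expected codimension is $g-\rho=g$. It therefore suffices to prove that this class equals $[0]$.

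Second, I would identify the support set-theoretically. If $(C,L)$ lies in $\mathcal{W}_0^0(\theta)$, then $L$ is a degree-$0$ line bundle with a nonzero section $s$, whose divisor is effective of total degree $0$, so $L\cong\mathcal{O}_C$. For $\theta$ a small generic perturbation of the $0$ stability condition, the trivial bundle is $\theta$-stable only on curves with no exceptional components (an exceptional $\mathbb{P}^1$ would be forced to carry positive degree), so $(C,L)$ must lie in the image of the $[0]$-section. Hence the support of $\mathsf{w}_0^0(\theta)$ is exactly the image of $[0]$, which is isomorphic to $\Mgn$, irreducible, and of codimension $g$, the expected codimension. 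Consequently $\mathsf{w}_0^0(\theta)=m\,[0]$ for a single multiplicity $m$, namely the generic length of the degeneracy scheme along $[0]$.

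Third, and this is the heart of the argument, I would compute $m=1$ over the open locus $\mathsf{Jac}^0\to\mathcal{M}_{g,n}$, where the compactified Jacobian is an abelian scheme. Near $[0]$ one has $h^0\le 1$, so $R\pi_*\mathcal{L}$ admits a presentation with $\mathcal{E}_0$ of rank $1$, and the zero section is cut out by the vanishing of $\phi\in\Gamma(\mathcal{E}_0^\vee\otimes\mathcal{E}_1)$, a section of a rank-$g$ bundle. Transversality amounts to showing that the derivative of $\phi$ in the fiber (normal) direction is an isomorphism: deforming $\mathcal{O}_C$ in a direction $v\in H^1(C,\mathcal{O}_C)$, the first-order obstruction to extending the section $1\in H^0(C,\mathcal{O}_C)$ is the cup product $s\mapsto s\cup v$, which sends $1\mapsto v$. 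Thus the normal derivative $H^1(C,\mathcal{O}_C)\to H^1(C,\mathcal{O}_C)$ is the identity, $\phi$ meets the zero section transversally, and $m=1$. Since $[0]$ is irreducible, the generic multiplicity determines the whole class, giving $\mathsf{w}_0^0(\theta)=[0]$.

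The main obstacle is twofold: making the boundary analysis in the second step precise, i.e. genuinely ruling out spurious components of $\mathcal{W}_0^0(\theta)$ over $\partial\Mgn$ using the explicit description of $\theta$-stability for small generic $\theta$; and carrying out the transversality computation in a way valid in the stacky, family setting, identifying the fiberwise derivative with cup product through the deformation theory of line bundles. Once these are in hand, irreducibility of the support reduces everything to the single generic multiplicity computed on smooth curves.
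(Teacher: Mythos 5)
Your argument is essentially correct, but it takes a completely different route from the paper: the paper's entire proof is the citation ``This is [KPH, Corollary 10]'', i.e.\ it outsources the statement to Holmes--Kass--Pagani, whereas you reconstruct the argument that such a reference must contain. Your three steps --- reduction of $\Delta^{(1)}_g(-R\pi_*\mathcal{L})$ to $c_g$, identification of the support with the image of the zero section, and the multiplicity-one computation via the cup product $H^0(\mathcal{O}_C)\otimes H^1(\mathcal{O}_C)\to H^1(\mathcal{O}_C)$ --- are the standard and correct way to do this, and you are right that irreducibility of the zero section reduces everything to the generic point over $\mathcal{M}_{g,n}$, where transversality is classical. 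The one place where your sketch is genuinely too quick, and where the hypothesis that $\theta$ is a \emph{small generic} perturbation of $0$ actually does work, is the sentence ``whose divisor is effective of total degree $0$, so $L\cong\mathcal{O}_C$'': this is only automatic on irreducible curves. On a reducible stable or quasi-stable curve a nonzero section may vanish identically on a subcurve $Z$, and one must use the stability inequality $|\deg_W L-\theta_W|<\#(W\cap Z)/2$ to show that the induced section of $L|_W(-W\cap Z)$ forces $\deg_W L\ge\#(W\cap Z)$, a contradiction unless $Z=\emptyset$; then nonnegativity of all multidegrees summing to zero gives multidegree zero and a nowhere-vanishing section, hence $L\cong\mathcal{O}_C$, and separately stability forces degree $1$ on exceptional components, so no excess components appear over the quasi-stable locus. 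This is exactly the ``boundary analysis'' you flag as an obstacle; it is a finite computation with the stability inequalities and is what the cited corollary supplies. The benefit of your route is that it is self-contained and makes visible where each hypothesis ($d=r=0$, $\theta$ small generic) is used; the cost is that you must carry out the stability bookkeeping and the family version of the transversality statement, which the paper avoids entirely by citation.
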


\begin{proof}
This is \cite[Corollary 10]{KPH}.
\end{proof}

Now, recall the construction of the logarithmic double ramification cycle from \cite{Holmes}. We look at the rational Abel-Jacobi section 
$$
\mathsf{aj}_A: \Mgn \dasharrow \mathsf{Jac}_{g,n}.
$$
Holmes constructs a space $\Mdiamond$ over $\Mgn$ with an extension 
$$
\mathsf{aj}_A^{\diamond}: \Mdiamond \to \mathsf{Jac}_{g,n}.
$$
The space $\Mdiamond$ is universal with this property, in a sense that we do not make precise here, but it is not proper. Holmes proves that, on the other hand, the fiber product 
\[
\begin{tikzcd}
\mathsf{DRL}: = 
\Mdiamond \times_{\mathsf{Jac}} \Mgn \ar[r] \ar[d] & \Mgn \ar[d,"0"] \\
\Mdiamond \ar[r,"\mathsf{aj}_A"] & \mathsf{Jac}_{g,n}
\end{tikzcd}
\]
is proper. By Gysin pullback, one obtains a class
$$
\mathsf{aj}_A^{!}[0]
$$
supported on $\mathsf{DRL}$, which, since the latter is proper, can be pushed forward to $\Mgn$. The resulting pushforward is the double ramification cycle $\mathsf{DR}_{g,A}$. Holmes proves more: the non-proper space $\Mdiamond$ is an open in a (non-canonical), sufficiently fine log modification $M' \to \Mgn$. Thus, the class $\mathsf{aj}_A^{!}[0]$ can be pushed forward to $M'$. Let us provisionally call the resulting class $\mathsf{DR}_{g,A}(M') \in \mathsf{CH}(M')$. Certainly $\mathsf{DR}_{g,A}(M')$ pushes forward to $\mathsf{DR}_{g,A}$, and so determines it. Conversely, it is not true in general that the pullback of $\mathsf{DR}_{g,A}$ is $\mathsf{DR}_{g,A}(M')$, and so $\mathsf{DR}_{g,A}(M')$ contains strictly more information. 

Holmes proves that the classes $\mathsf{DR}_{g,A}(M')$, as $M'$ varies through log modifications of $\Mgn$ compactifying $\Mdiamond$ lift to the logarithmic Chow ring of $\Mgn$: If $p:M'' \to M'$ is a further modification, then 
$$
p^*\mathsf{DR}_{g,A}(M') = \mathsf{DR}_{g,A}(M'').
$$
Thus, the classes $\mathsf{DR}_{g,A}(M')$ can be considered all together as an element in 
$$
\log\mathsf{CH}(\Mgn, \partial \Mgn) = \varinjlim_{M' \to \Mgn} \mathsf{CH}(M')
$$
where $M' \to \Mgn$ ranges through all log modifications of $\Mgn$. The element is called the logarithmic double ramification cycle 
$$
\log \mathsf{DR}_{g,A}
$$
and the class $\mathsf{DR}_{g,A}(M')$ is called a \emph{representative} of $\log \mathsf{DR}_{g,A}$ on $M'$. 

What is important for us from the above discussion is that the space $\DMgn^\theta$ does compactify $\Mdiamond$, and so carries a representative $\mathsf{DR}_{g,A}(\DMgn^\theta)$ for $\log \mathsf{DR}_{g,A}$ (see \cite[Theorem A]{HMPPS}). We write $\mathsf{DR}_{g,A}^\theta$ for the class to lighten the notation. By construction, the space $\DMgn^\theta$ comes with a resolved Abel-Jacobi section 
$$
\mathsf{aj}_A^\theta: \DMgn^\theta \to \mathsf{Pic}_{g,n}^\theta.
$$

\begin{lemma}
Let $\theta$ be a small generic stability condition, and $A$ the discrete data consisting of an integer $k$ and a vector of integers $(a_1,\cdots,a_n)$ with $k(2g-2)=\sum a_i$. We have  
$$
\mathsf{DR}_{g,A}^\theta = (\mathsf{aj}_A^\theta)^*([0]) \in \mathsf{CH}(\DMgn^\theta)
$$.
\end{lemma}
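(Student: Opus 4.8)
The plan is to show that both sides are refined Gysin pullbacks of the \emph{same} zero section, computed over the \emph{same} intersection locus, and hence coincide. First I would unwind the two definitions. By the previous lemma, $[0] = \mathsf{w}_0^0(\theta)$ is the class of the (closure of the) zero section $i\colon \Mgn \to \mathsf{Pic}_{g,n}^\theta$ of the compactified Jacobian. Since $\DMgn^\theta$ and $\mathsf{Pic}_{g,n}^\theta$ are both smooth and $\mathsf{aj}_A^\theta$ is an honest morphism, the class $(\mathsf{aj}_A^\theta)^*[0]$ is the refined Gysin pullback $i^!\,[\DMgn^\theta]$, supported on the fiber product $(\mathsf{aj}_A^\theta)^{-1}([0])$. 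On the other side, $\mathsf{DR}_{g,A}^\theta$ is, by Holmes' construction recalled above, the extension to $\DMgn^\theta$ of the refined Gysin class $\mathsf{aj}_A^![0]$ attached to the zero section of the \emph{open} Jacobian $\mathsf{Jac}_{g,n}$ and supported on $\mathsf{DRL} = \Mdiamond \times_{\mathsf{Jac}}\Mgn \subset \Mdiamond$.

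The heart of the argument is to identify these two supports. Using the log functor of points of $\DMgn^\theta$, a point maps into the zero section under $\mathsf{aj}_A^\theta$ exactly when its associated $\theta$-stable bundle $\omega^k(\sum a_ix_i)\otimes\mathcal{O}(D)\otimes\mathcal{O}(-\alpha)$ is trivial; this is precisely the condition cutting out Holmes' double ramification locus, and such points already lie in the open $\Mdiamond$ where $\mathsf{aj}_A$ resolves into the open Jacobian. I would therefore argue that
$$
(\mathsf{aj}_A^\theta)^{-1}([0]) = \mathsf{DRL} \subset \Mdiamond,
$$
so that $(\mathsf{aj}_A^\theta)^*[0]$ is entirely supported inside $\Mdiamond$ and receives no contribution from the boundary $\DMgn^\theta \setminus \Mdiamond$.

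With the supports identified, both refined Gysin pullbacks are computed from the same fiber product $\mathsf{DRL}$. Over $\Mdiamond$ the morphism $\mathsf{aj}_A^\theta$ factors through the open immersion $\mathsf{Jac}_{g,n} \hookrightarrow \mathsf{Pic}_{g,n}^\theta$ and agrees with Holmes' $\mathsf{aj}_A^\diamond$, while the zero section of $\mathsf{Pic}_{g,n}^\theta$ restricts to the zero section of $\mathsf{Jac}_{g,n}$ with the same normal data, since an open immersion changes neither. Hence the two formalisms produce identical normal-bundle contributions, giving
$$
(\mathsf{aj}_A^\theta)^*[0] = \mathsf{aj}_A^![0] = \mathsf{DR}_{g,A}^\theta.
$$

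I expect the main obstacle to be the scheme-theoretic identification $(\mathsf{aj}_A^\theta)^{-1}([0]) = \mathsf{DRL}$ together with its containment in $\Mdiamond$: this requires comparing the zero section of the compactified Jacobian (accessible here only through $\mathsf{w}_0^0(\theta) = [0]$ via \cite{KPH}) with Holmes' intrinsic description of $\Mdiamond$ and $\mathsf{DRL}$ as the loci where the resolved Abel--Jacobi bundle acquires multidegree zero, respectively becomes trivial. If a direct comparison of supports proves delicate, the fallback is to invoke \cite[Theorem A]{HMPPS} directly, where the representative $\mathsf{DR}_{g,A}^\theta$ is constructed as precisely the pullback of the zero section along $\mathsf{aj}_A^\theta$, reducing the lemma to the identification $[0] = \mathsf{w}_0^0(\theta)$ already in hand.
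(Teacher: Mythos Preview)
Your primary approach is correct and is precisely the ``elementary'' proof the paper alludes to but deliberately does not spell out, saying it ``is essentially a tautology, if one unwinds the definitions of $\DMgn^\theta$ and $\Mdiamond$.'' The paper instead gives a different, self-described ``roundabout, high-technology'' argument: it invokes \cite{HMPPS} to identify $\mathsf{DR}_{g,A}^\theta$ with the operational class $\mathsf{DR}^{\textup{op}}_{g,\emptyset,\mathcal{L}^\theta}$ of \cite{BHPSS}, which is by definition the pullback of the \emph{closure} $[\overline{0}]$ of the zero section from the universal Picard stack $\mathfrak{Pic}_{g,n}$; then it observes that $\mathsf{Pic}_{g,n}^\theta \subset \mathfrak{Pic}_{g,n}$ is open and that, for small generic $\theta$, the zero section is already closed in $\mathsf{Pic}_{g,n}^\theta$, so the two pullbacks agree.

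Your route is more geometric and self-contained, comparing the two refined Gysin pullbacks directly over their common support $\mathsf{DRL}$; its cost is the verification you correctly flag as the main obstacle, namely that $(\mathsf{aj}_A^\theta)^{-1}([0]) \subset \Mdiamond$, which needs both the universality of $\Mdiamond$ and the fact that the zero section lies in the open Jacobian. The paper's route trades this for black-boxing the \cite{HMPPS}/\cite{BHPSS} comparison. One caution on your fallback: \cite[Theorem~A]{HMPPS} as cited in the paper asserts only that $\DMgn^\theta$ compactifies $\Mdiamond$ and hence carries a representative of $\log\mathsf{DR}_{g,A}$; it does not directly present that representative as $(\mathsf{aj}_A^\theta)^*[0]$, so you would still need either your direct argument or the \cite{BHPSS} identification the paper uses.
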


\begin{proof}
The lemma has an elementary proof which is essentially a tautology, if one unwinds the definitions of $\DMgn^\theta$ and $\Mdiamond$. Since we do not wish to do so here \footnote{A thorough discussion of the functors of points will be given in forthcoming work with Melo,Ulirsch,Viviani and Wise.}, we give a roundabout, high-technology proof. It is shown in \cite{HMPPS} that 
$$
\mathsf{DR}_{g,A}^\theta = \mathsf{DR}^{\textup{op}}_{g,\emptyset,\mathcal{L}^\theta}.
$$
That is, $\mathsf{DR}_{g,A}^\theta$ coincides with the ``universal double ramification cycle" of \cite{BHPSS}, calculated for the line bundle $\mathcal{L}^\theta = \omega^k(\sum a_ix_i) \otimes \mathcal{O}(-\alpha)$ on $\mathcal{C}^\theta$. The latter is defined as the pullback to $\DMgn^\theta$ of the \emph{closure} of the $0$ section via the Abel-Jacobi section 
$$
\mathsf{aj}_A:\DMgn \to \mathfrak{Pic}_{g,n}
$$
where $\mathfrak{Pic}_{g,n}$ is the universal Picard stack: it parametrizes genus $g$, $n$-marked prestable curves with line bundles. Symbolically 
$$
\mathsf{DR}^{\textup{op}}_{g,\emptyset,\mathcal{L}^\theta}:=\mathsf{aj}_A^*[\overline{0}].
$$
The compactified Jacobian $\mathsf{Pic}_{g,n}^\theta$ is an open substack of $\mathfrak{Pic}_{g,n}$, and we have a commutative diagram
\[
\begin{tikzcd}
 & \mathsf{Pic}_{g,n}^\theta \ar[dd] \\
 \DMgn^\theta \ar[ur,"\mathsf{aj}_A^\theta"] \ar[dr,"\mathsf{aj}_A"] & \\
 & \mathfrak{Pic}_{g,n}.
\end{tikzcd}
\]
For a small generic stability condition $\theta$, the $0$ section is closed in $\mathsf{Pic}_{g,n}^\theta$. Hence 
$$
\mathsf{aj}_A^*([\overline{0}]) = (\mathsf{aj}_A^\theta)^*([0])
$$
and thus 
$$
\mathsf{DR}_{g,A}^\theta = (\mathsf{aj}_A^\theta)^*([0]).
$$
\end{proof}

We have thus shown 

\begin{corollary}
We have 
$$
\mathsf{DR}_{g,A}^\theta = w_{g,A,0}^0(\theta).
$$
\end{corollary}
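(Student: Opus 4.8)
The plan is to obtain the corollary as an immediate formal consequence of the two preceding lemmas, with no further geometric input required. Both classes in question live in $\mathsf{CH}(\DMgn^\theta)$: the representative $\mathsf{DR}_{g,A}^\theta$ of the logarithmic double ramification cycle, and the lift $\log \mathsf{w}_{g,A,0}^0(\theta) = (\mathsf{aj}_A^\theta)^*(\mathsf{w}_{g,0}^0(\theta))$ of the specialized Brill--Noether class. So the strategy is simply to compute each as a pullback of the same cycle on $\mathsf{Pic}^\theta_{g,n}$ along the resolved Abel--Jacobi section $\mathsf{aj}_A^\theta$, and then chain the resulting equalities.

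First I would unwind the definition of the Brill--Noether class in the specialized setting $d=r=0$, $D=0$, $\sum a_i = k(2g-2)$: by definition $\log \mathsf{w}_{g,A,0}^0(\theta) = (\mathsf{aj}_A^\theta)^*(\mathsf{w}_{0}^0(\theta))$, which reduces the entire question to understanding the universal class $\mathsf{w}_0^0(\theta)$ on $\mathsf{Pic}^\theta_{g,n}$. Next I would invoke the first lemma, which identifies $\mathsf{w}_0^0(\theta)$ with the class $[0]$ of the zero section of $\mathsf{Pic}^\theta_{g,n}$ via \cite{KPH}; this is precisely the degenerate Brill--Noether input that makes the $d=r=0$ case special. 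Substituting yields $\log \mathsf{w}_{g,A,0}^0(\theta) = (\mathsf{aj}_A^\theta)^*([0])$. On the other side, the second lemma---established under the same hypothesis that $\theta$ is a small generic perturbation of the $0$ stability condition, so that the zero section is closed in $\mathsf{Pic}^\theta_{g,n}$---identifies $\mathsf{DR}_{g,A}^\theta = (\mathsf{aj}_A^\theta)^*([0])$. Chaining the two equalities gives $\mathsf{DR}_{g,A}^\theta = \log \mathsf{w}_{g,A,0}^0(\theta)$, which is the claim.

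The only point requiring care is that the two lemmas be applied in a genuinely compatible regime: both the identification $\mathsf{w}_0^0(\theta) = [0]$ and the identification of $\mathsf{DR}_{g,A}^\theta$ with a pullback of the zero section rely on $\theta$ being small and generic, precisely so that the zero section sits as a closed substack and its fundamental class---hence its Gysin pullback along $\mathsf{aj}_A^\theta$---is unambiguous. Since both lemmas are stated under exactly this hypothesis, there is no genuine obstacle here; the substantive content lies entirely in those two lemmas, and the corollary is their formal consequence.
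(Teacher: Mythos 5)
Your proof is correct and matches the paper's own argument: the corollary is stated there with "We have thus shown," i.e.\ it is exactly the chain $\mathsf{DR}_{g,A}^\theta = (\mathsf{aj}_A^\theta)^*([0]) = (\mathsf{aj}_A^\theta)^*(\mathsf{w}_{0}^0(\theta)) = \log\mathsf{w}_{g,A,0}^0(\theta)$ obtained by combining the two preceding lemmas. Your remark that the class denoted $w_{g,A,0}^0(\theta)$ in the corollary must be read as the lift on $\DMgn^\theta$ (rather than its pushforward to $\Mgn$) is the right reading of the paper's notation.
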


Combining this with theorem \ref{thm: mainthm}, we obtain 

\begin{corollary}
The class $\mathsf{DR}_{g,A}^\theta \in R^*(\DMgn^\theta)$.
\end{corollary}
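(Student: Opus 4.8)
The plan is to read off the statement from the identification recorded in the preceding corollary together with the degeneracy-locus argument already assembled to prove Theorem \ref{thm: chernclassestaut}. The preceding corollary supplies the equality $\mathsf{DR}_{g,A}^\theta = \mathsf{w}_{g,A,0}^0(\theta)$ of classes \emph{on} $\DMgn^\theta$; here the right-hand side is the representative $(\mathsf{aj}_A^\theta)^*(\mathsf{w}_{g,0}^0(\theta))$, obtained by pulling back the virtual Brill--Noether class in the case $d=r=0$ along the resolved Abel--Jacobi section (recall that by the earlier lemma $\mathsf{w}_{g,0}^0(\theta)$ is the class of the $[0]$ section). So it suffices to show that this one pullback lies in $R^\star(\DMgn^\theta)$.

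First I would unwind the pullback using Lemma \ref{lem: pullbackcomplex}. The Cartesian square relating the universal curve $\mathcal{C}^\theta \to \DMgn^\theta$ to the universal curve over $\mathsf{Pic}_{g,n}^\theta$ identifies $(\mathsf{aj}_A^\theta)^*\Delta_{g}^{(1)}(-R\pi_*\mathcal{L})$ with the degeneracy locus $\Delta_{g}^{(1)}(-R\rho_*\mathcal{L}^\theta)$, where $\mathcal{L}^\theta = \omega^k(\sum a_ix_i)\otimes\mathcal{O}(-\alpha)$ (taking $D=0$, as in this subsection). This is exactly the shape to which Theorem \ref{thm: chernclassestaut} applies.

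Next I would invoke Theorem \ref{thm: chernclassestaut}: since the Chern classes of $\omega^k(\sum a_ix_i)$ are tautological on $\overline{\mathcal{C}}_{g,n}$, the theorem guarantees that every Chern class of $R\rho_*\mathcal{L}^\theta$ lies in $R^\star(\DMgn^\theta)$. The Thom--Porteous formula then writes the degeneracy locus $\Delta_{g}^{(1)}(-R\rho_*\mathcal{L}^\theta)$ as a universal polynomial in these Chern classes, so the locus itself lies in $R^\star(\DMgn^\theta)$. Combining this with the identification of the preceding corollary yields $\mathsf{DR}_{g,A}^\theta \in R^\star(\DMgn^\theta)$.

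I do not expect a genuine obstacle here, as the statement is a corollary of machinery already in place. The one point that needs care is that the claim is about the \emph{representative} on the log modification $\DMgn^\theta$, and not merely about its pushforward to $\Mgn$ (which is what Theorem \ref{thm: mainthm} records). The operative input is therefore the representative-level conclusion extracted above — that the pullback $(\mathsf{aj}_A^\theta)^*(\mathsf{w}_{g,0}^0(\theta))$ is itself tautological on $\DMgn^\theta$, not only its image under $p^\theta_*$ — rather than Theorem \ref{thm: mainthm} on its own.
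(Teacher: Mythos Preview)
Your proposal is correct and follows essentially the same approach as the paper: identify $\mathsf{DR}_{g,A}^\theta$ with the representative $(\mathsf{aj}_A^\theta)^*(\mathsf{w}_{g,0}^0(\theta))$ on $\DMgn^\theta$ via the preceding corollary, then invoke the degeneracy-locus/Thom--Porteous argument (Lemma~\ref{lem: pullbackcomplex} plus Theorem~\ref{thm: chernclassestaut}) to conclude it lies in $R^\star(\DMgn^\theta)$. The paper phrases this as ``combining with Theorem~\ref{thm: mainthm}'', but you are right to point out that what is actually used is the representative-level statement established in the course of that proof (i.e., that $\log \mathsf{w}_{g,A,d}^r(\theta)$, computed on $\DMgn^\theta$, already lies in $R^\star(\DMgn^\theta)$), not merely the tautologicalness of the pushforward to $\Mgn$.
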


 In particular, all intersections of classes $\mathsf{DR}_{g,A}^\theta$ for various choices of $A,\theta$ push forward to tautological classes in $\Mgn$.

\bibliography{refs2}

\vspace{8pt}

\noindent Departement Mathematik, ETH Z\"urich\\
\noindent Rämistrasse 101, 8092 Zürich, Switzerland\\
\noindent samouil.molcho@math.ethz.ch

\end{document}